\newtheorem{theorem}{Theorem}
\newtheorem*{theorem*}{Theorem}
\newtheorem*{lemma*}{Lemma}
\newtheorem{lemma}[theorem]{Lemma}
\newtheorem*{observation*}{Observation}
\newtheorem*{claim*}{Claim}
\newtheorem{claim}{Claim}
\renewcommand{\le}{\leqslant}
\renewcommand{\ge}{\geqslant}
\renewcommand{\epsilon}{\varepsilon}
\newcommand{\E}{\mathbb{E}}
\newcommand{\Pb}{\mathbb{P}}
\newcommand{\Z}{\mathbb{Z}}
\newcommand{\N}{\mathbb{N}}
\newcommand{\R}{\mathbb{R}}
\newcommand{\Bin}{\mathrm{Bin}}
\newcommand{\mc}{\mathcal}
\newcommand{\Poi}{{\mathrm{Poi}}}
\newcommand{\dtv}{{d_{\mathrm{TV}}}}
\newcommand{\imax}{{i_{\mathrm{max}}}}
 \title{Non-concentration of the chromatic number of a random graph 
\footnotetext{2010 Mathematics Subject Classification: 05C15, 05C80 }
 }
\author{Annika Heckel
\thanks{Mathematisches Institut der Universit\"at M\"unchen, Theresienstr.\ 39, 80333 M\"unchen, Germany. E-mail: 
\texttt{heckel@math.lmu.de}. This research was funded by ERC Grants 676632-RanDM and 772606-PTRCSP.
}
}
\date{\today 
}
\begin{document}
 \maketitle 
\begin{abstract}
We show that the chromatic number of $G_{n, \frac 12}$ is not concentrated on fewer than $n^{\frac 14-\epsilon}$ consecutive values. This addresses a longstanding question raised by Erd\H os and several other authors.
\end{abstract}

\section{Introduction}
Random graph theory was initiated in the late 1950s and early 1960s in the pioneering works of Erd\H os and R\'enyi~\cite{erdos1959random, erdos1960evolution}. For the \emph{binomial random graph} $G_{n,p}$, include each possible edge between $n$ labelled vertices independently with probability $p$. For the closely related \emph{uniform random graph} $G_{n,m}$, choose a set of exactly $m$ edges uniformly at random from all edge sets of size $m$. Both of these models have been studied extensively since their introduction sixty years ago, and we refer to the standard texts \cite{bollobas:randomgraphs} and \cite{janson:randomgraphs} for the rich history of this subject and many impressive results.

The \emph{chromatic number} of a graph $G$, denoted by $\chi(G)$,
is a central concept both in graph theory in general and in random graphs in particular. It is defined as the minimum number of colours needed for a vertex colouring of $G$ where no two adjacent vertices are coloured the same. The study of the chromatic number of random graphs goes back to the foundational papers by Erd\H os and R\'enyi \cite{erdos1960evolution} and includes some of the most 
celebrated results in random graph theory. 

The case of dense random graphs where $p=\frac 12$ has received particular attention. Grimmett and McDiarmid \cite{grimmett1975colouring} first found the order of magnitude of $\chi(G_{n, \frac 12})$ in 1975,
and in a breakthrough paper in 1987, Bollob\'as \cite{bollobas1988chromatic} used martingale concentration arguments to 
establish the asymptotic value.
\begin{theorem}[\cite{bollobas1988chromatic}]\label{theorem:bollobas}With high probability\footnote{As usual, we say that a sequence $(E_n)_{n \in \N}$ of events holds \emph{with high probability (whp)} if $\Pb(E_n) \rightarrow 1$ as $n \rightarrow \infty$.},
\[\chi(G_{n, \frac 12}) \sim \frac{n}{2 \log_2 n}.\] 
\end{theorem}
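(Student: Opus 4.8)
The plan is to establish matching asymptotic bounds. The lower bound is easy: every colour class of a proper colouring is an independent set, so $\chi(G) \geq n/\alpha(G)$ for every graph $G$, and it suffices to bound $\alpha(G_{n,\frac 12})$ from above. For $k = \lceil (2+\epsilon)\log_2 n \rceil$ the expected number of independent $k$-sets is $\binom{n}{k} 2^{-\binom{k}{2}} = o(1)$, so by Markov's inequality whp $\alpha(G_{n,\frac 12}) \leq (2+\epsilon)\log_2 n$, and hence whp $\chi(G_{n,\frac 12}) \geq (1-o(1))\tfrac{n}{2\log_2 n}$. Since $\epsilon$ is arbitrary, this gives the lower bound.

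For the upper bound I would use the standard ``peeling'' strategy --- repeatedly delete a large independent set and give it a new colour --- whose success rests on a strong lower-tail bound for the independence number. Fix $m$ and put
\[
 k(m) := \big\lfloor 2\log_2 m - 2\log_2\log_2 m - C \big\rfloor
\]
for a sufficiently large absolute constant $C$. This sits a bounded distance below the value where the first moment equals $1$, so the expected number $\mu$ of independent $k(m)$-sets in $G_{m,\frac 12}$ is at least a large fixed power of $m$. A routine computation of the clustering term $\bar\Delta = \sum \Pb[S, T \text{ both independent}]$, the sum being over ordered pairs of $k(m)$-sets with $|S\cap T|\ge 2$, shows that it is dominated by the overlap-$2$ contribution and that $\bar\Delta = \mu^2 \cdot m^{-2+o(1)}$. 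Plugging this into Janson's inequality (in the form $\Pb[X=0]\le e^{-\mu}$ when $\bar\Delta\le\mu$, and $\Pb[X=0]\le e^{-\mu^2/(2\bar\Delta)}$ otherwise) yields
\[
 \Pb\!\left[\alpha\big(G_{m,\tfrac 12}\big) < k(m)\right] \le e^{-m^{2-o(1)}}.
\]

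This tail bound is exactly what lets one ignore the adaptivity inherent in peeling. Since the subgraph of $G_{n,\frac 12}$ induced on any fixed set $W$ with $|W|=m$ is distributed as $G_{m,\frac 12}$, a union bound over the at most $\binom{n}{m}\le 2^n$ choices of $W$, and then over $m$, shows that whp \emph{every} vertex set $W\subseteq[n]$ with $|W|\ge n/\log^2 n$ satisfies $\alpha(G_{n,\frac 12}[W]) \ge k(|W|) \ge (2-o(1))\log_2 n$; here one uses that $(n/\log^2 n)^{2-o(1)}$ dwarfs $n$, so the union bound goes through. Conditioning on this event, colour greedily: while at least $n/\log^2 n$ vertices remain uncoloured, extract from them an independent set of size $(2-o(1))\log_2 n$ (which exists by the previous sentence, regardless of which vertices are currently uncoloured) and give it a fresh colour; this phase uses at most $(1+o(1))\tfrac{n}{2\log_2 n}$ colours. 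Finally colour each of the fewer than $n/\log^2 n$ surviving vertices with its own colour, a further $o(n/\log n) = o\!\big(\tfrac{n}{2\log_2 n}\big)$ colours. Hence whp $\chi(G_{n,\frac 12}) \le (1+o(1))\tfrac{n}{2\log_2 n}$, which together with the lower bound proves the theorem.

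The crux is the superpolynomial lower-tail estimate for $\alpha(G_{m,\frac 12})$. One must choose $k(m)$ just far enough below the typical value --- giving up only an additive $O(\log\log m)$, which is immaterial for the asymptotics --- that Janson's inequality returns a failure probability comfortably below $2^{-n}$, and the estimate for $\bar\Delta$ needs the sum over the overlap size $|S\cap T|=i$ for $2\le i\le k(m)-1$ to be split carefully, with the $i=2$ term shown to dominate. (Bollob\'as's original proof replaced Janson's inequality, not then available in this form, by a direct martingale argument, but the skeleton is the same.) Everything else --- the first-moment lower bound, the greedy peeling, and the trivial handling of the small leftover set --- is routine.
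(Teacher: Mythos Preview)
The paper does not prove Theorem~\ref{theorem:bollobas}; it is quoted as a background result from \cite{bollobas1988chromatic} with no proof supplied. So there is nothing in the paper to compare your argument against.

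That said, your proposal is correct and is essentially the standard modern presentation of Bollob\'as's result (see e.g.\ \cite{alonspencer} or \cite{bollobas:randomgraphs}). The lower bound via $\chi\ge n/\alpha$ and a first-moment bound on $\alpha$ is immediate. For the upper bound, your choice $k(m)=\lfloor 2\log_2 m - 2\log_2\log_2 m - C\rfloor$ places $\mu$ at a large fixed power of $m$; the second-moment computation showing $\bar\Delta=\mu^2 m^{-2+o(1)}$ (overlap $2$ dominates) is standard, and Janson then gives the crucial $e^{-m^{2-o(1)}}$ lower tail, which comfortably beats the $2^n$ union bound over subsets of size $m\ge n/\log^2 n$. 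The peeling and leftover steps are routine. Your closing remark is also accurate: Bollob\'as's original 1988 argument predates Janson's inequality and instead obtained the required superpolynomial tail via a martingale (vertex-exposure) concentration bound, but the overall architecture is the same.
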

Several improvements to these bounds were made by McDiarmid \cite{mcdiarmid1989method}, Panagiotou and Steger~\cite{panagiotou2009note} and Fountoulakis, Kang and McDiarmid \cite{fountoulakis2010t}. In 2016 \cite{heckel2018chromatic}, the author used the second moment method, combined with martingale concentration arguments, to obtain the following result.
\begin{theorem}[\cite{heckel2018chromatic}]\label{theorem:bounds}
With high probability,  
\[
 \chi(G_{n, \frac 12}) = \frac{n}{2 \log_2 n-2 \log_2 \log_2 n - 2}+o\left(\frac{n}{\log^2 n}\right).
\]
\end{theorem}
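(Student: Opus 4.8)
The plan is to sandwich $\chi(G_{n,\frac12})$ between matching first and second moment bounds, both of which identify $k^\ast := 2\log_2 n-2\log_2\log_2 n-2$ as the largest size of independent set into which the vertex set can essentially be tiled. As a preliminary reduction, $\chi$ is $1$-Lipschitz under vertex exposure, so by Azuma's inequality $\chi(G_{n,\frac12})$ lies within $O(\sqrt n\log n)=o(n/\log^2 n)$ of its mean with high probability; hence it suffices to show that $\Pr\bigl(\chi(G_{n,\frac12})\le \tfrac n{k^\ast}-o(n/\log^2 n)\bigr)\to 0$ and that $\Pr\bigl(\chi(G_{n,\frac12})\le \tfrac n{k^\ast}+o(n/\log^2 n)\bigr)$ stays bounded away from $0$, the martingale then upgrading the latter to a statement holding with high probability and absorbing all rounding into the error term.

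For the lower bound I would apply the first moment method to proper colourings, regarded as partitions of $V$ into independent sets. Grouping by profile $(n_j)_j$ (the number of colour classes of size $j$, so $\sum_j jn_j=n$), the expected number of proper colourings with that profile is
\[
\frac{n!}{\prod_j (j!)^{n_j}\,n_j!}\;\prod_j 2^{-n_j\binom j2}.
\]
A careful application of Stirling's formula shows that the logarithm of the expected number of proper colourings into $q$ nearly balanced classes is $\tfrac{\ln 2}{2}\bigl(k^\ast-\tfrac nq\bigr)n$, up to lower-order terms that move the resulting colouring threshold by only $o(n/\log^2 n)$; that balanced profiles dominate; and that summing over all profiles costs merely an $e^{O(\sqrt n)}$ factor. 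Since this exponent vanishes precisely when $\tfrac nq=k^\ast$ --- which is where the constant $-2$ comes from --- for $q\le \tfrac n{k^\ast}-o(n/\log^2 n)$ the expected number of proper colourings with at most $q$ colours tends to $0$, and Markov's inequality yields the lower bound.

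The upper bound is the difficult direction, where I would run the second moment method on a \emph{restricted} family. Fix $q^+$ slightly larger than $n/k^\ast$ and let $Z$ count the proper colourings of $G_{n,\frac12}$ into $q^+$ classes that are as balanced as possible --- essentially all of size $\lfloor k^\ast\rfloor$ or $\lceil k^\ast\rceil$. The same Stirling estimate gives $\E[Z]\to\infty$; the heart of the proof, and the step I expect to be the main obstacle, is to show $\E[Z^2]=O\bigl(\E[Z]^2\bigr)$. Writing $\E[Z^2]$ as a sum over ordered pairs of such colourings indexed by their overlap matrix $(m_{ij})$, where $m_{ij}$ is the number of vertices lying in class $i$ of the first colouring and class $j$ of the second, the ``product'' overlap $m_{ij}\approx |C_i|\cdot|C_j'|/n$ contributes $(1+o(1))\E[Z]^2$, and one must bound the contribution of every other overlap matrix uniformly by a large-deviations estimate over this space; it is here that balancing the colour classes is indispensable, so that the entropy gained by correlating the two colourings never outweighs the probability cost --- the mechanism behind the use of balanced colourings in the Achlioptas--Naor second moment analysis of sparse random graphs. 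Once this is done, $\Pr(Z>0)\ge\E[Z]^2/\E[Z^2]$ is bounded away from $0$, and the martingale concentration finishes the argument. A greedy colouring removing near-maximum independent sets --- the classical route to the leading-order result --- does not obviously yield the sharp constant, since after many removals the remaining graph is no longer a fresh $G_{m,\frac12}$ and its independence number is hard to control; this is what forces the global argument, and the care needed to make the two first moment thresholds coincide at $s=k^\ast$ to within $o(n/\log^2 n)$ is exactly why one must track the second-order terms in the relevant Stirling and independence-number asymptotics.
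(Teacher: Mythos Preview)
This theorem is not proved in the present paper; it is quoted from \cite{heckel2018chromatic} and used only as an input to the non-concentration argument (see~(\ref{eq:an})). There is therefore no proof here to compare your attempt against.

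For what it is worth, your outline matches the broad strategy of \cite{heckel2018chromatic}: a first-moment bound over colouring profiles gives the lower bound (and your observations that the balanced profile dominates and that summing over profiles costs only a factor $e^{O(\sqrt n)}$ are correct), a second-moment argument on a restricted family of colourings gives the upper bound, and the Bollob\'as martingale trick upgrades ``probability bounded away from $0$'' to ``whp''. The place your sketch may underestimate the difficulty is the second-moment step. For dense $p$ the correlation structure is quite different from the sparse Achlioptas--Naor setting you invoke --- colour classes have size $\Theta(\log n)$ rather than $O(1)$, so the overlap analysis is organised differently --- and in \cite{heckel2018chromatic} the family on which the second moment is run is chosen with some care rather than being simply the balanced colourings. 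Pinning down the constant $-2$ in the denominator also requires tracking the $\Theta(n)$ Stirling and $q\log q$ corrections precisely; your sketch acknowledges this but does not carry it out, and that bookkeeping is a substantial part of the work in the cited paper.
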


While these bounds give an explicit interval of length $o \left( \frac n {\log^2 n} \right)$ which contains $\chi(G_{n, \frac 12})$ whp, much narrower concentration is known to hold. A remarkable result of Shamir and Spencer~\cite{shamir1987sharp} states that for \emph{any} sequence $p=p(n)$, $\chi(G_{n,p})$ is whp contained in a (non-explicit) sequence of intervals of length about $\sqrt{n}$. For $p = \frac 12$, Alon improved this slightly to about $\frac{\sqrt{n}}{\log n}$ (this is \S 7.9, Exercise 3 in \cite{alonspencer}, see also \cite{scott2008concentration}).

For sparse random graphs, much more is known: Shamir and Spencer~\cite{shamir1987sharp} also showed that for $p<n^{-\frac 56 -\varepsilon}$, $\chi(G_{n,p})$ is whp concentrated on only {five} consecutive values; {\L}uczak \cite{luczak1991note} improved this to two consecutive values and finally Alon and Krivelevich~\cite{alon1997concentration} showed that two point concentration holds for $p<n^{-\frac 12-\varepsilon}$. In a landmark contribution, Achlioptas and Naor \cite{achlioptas2005two} found two \emph{explicit} such values for $p=d/n$ where $d$ is constant, and Coja-Oghlan, Panagiotou and Steger \cite{coja2008chromatic} extended this to three explicit values for $p<n^{-\frac34 - \epsilon}$.

However, while there is a wealth of results asserting sharp concentration of the chromatic number of $G_{n,p}$, until now there have been no non-trivial cases where $\chi(G_{n,p})$ is known \emph{not} to be extremely narrowly concentrated. (Though Alon and Krivelevich \cite{alon1997concentration} note that it is trivial that $\chi(G_{n,p})$ is not concentrated on fewer than $\Theta(\sqrt{n})$ values for $p=1-1/(10n)$.) 

In his appendix to the standard text on the probabilistic method~\cite{alonspencerfirstedition}, Erd\H os asked the following question (see also \cite{chung1998erdos}): How accurately can $\chi(G_{n, \frac 12})$ be estimated? 
Can it be shown \emph{not} to be concentrated on a series of intervals of constant length? Variants of this question are discussed in \cite{alon1997concentration}, \cite{bollobas:randomgraphs}, \cite{glebov2015concentration} and \cite{mcdiarmidsurvey}. 
In 2004,  Bollob\'as \cite{bollobas:concentrationfixed} asked for any non-trivial examples of non-concentration of the chromatic number of random graphs, specifically suggesting the dense random graph $G_{n,m}$ with $m= \left \lfloor n^2/4 \right \rfloor$ (which corresponds to $p=\frac 12$) as a candidate.  
He mentions discussing the question frequently with Erd\H{o}s in the late 1980s, and notes that ``even the weakest results claiming lack of concentration would be of interest.''

In this paper, we show that $\chi(G_{n, \frac 12})$ is not whp concentrated on fewer than $n^{\frac 14 - \varepsilon}$ consecutive values. As a corollary, the same is true for the random graph $G_{n,m}$ with $m= \left \lfloor n^2/4 \right \rfloor$; more details are given in Section \ref{section:remarks}.
\begin{theorem}\label{theorem:nonconcentration}
 For any constant $c< \frac 14$, there is no sequence of intervals of length $n^{c}$ which contain $\chi(G_{n, \frac 12})$ with high probability. 
\end{theorem}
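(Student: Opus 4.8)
\section*{Proof proposal}

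The plan is to prove an \emph{unconditional} anti-concentration estimate for $\chi(G_{n,\frac12})$,
\[
\max_{k\in\Z}\ \Pr\!\big(\chi(G_{n,\tfrac12})=k\big)\ \le\ n^{-\frac14+o(1)},
\]
which immediately gives the theorem: an interval of length $n^{c}$ meets at most $n^{c}+1$ integers, so if $c<\frac14$ then $\Pr(\chi(G_{n,\frac12})\in I)\le (n^{c}+1)\,n^{-\frac14+o(1)}\to0$ for every such interval $I=I_n$, and hence no sequence of intervals of length $n^c$ can contain $\chi(G_{n,\frac12})$ with high probability. (If the uniform-in-$k$ version proves too strong to obtain directly, the same mechanism yields the weaker statement that for some $\rho>0$ there are configurations of probability $\ge\rho$ on which $\chi(G_{n,\frac12})$ is ``low'' and others of probability $\ge\rho$ on which it is ``high'', the gap exceeding $n^{c}$ — using the $O(\sqrt n)$ concentration of Shamir--Spencer to discard atypical configurations — which also suffices.)

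The estimate will come from a two-round exposure. First I would choose a set of $m=n^{1/2-o(1)}$ ``switches'' — for concreteness, $m$ vertex-disjoint pairs $\{a_1,b_1\},\dots,\{a_m,b_m\}$ — and let $\mathcal G$ be the $\sigma$-algebra generated by \emph{all} of $G_{n,\frac12}$ except the $m$ potential edges $a_jb_j$. Conditionally on $\mathcal G$, the remaining randomness is the vector of independent fair bits $\xi_j=\mathbbm{1}[a_jb_j\in E]$, and $\chi(G_{n,\frac12})=F_{\mathcal G}(\xi_1,\dots,\xi_m)$ for a function that is monotone non-decreasing and $1$-Lipschitz in the Hamming metric, since adding one edge changes $\chi$ by at most $1$. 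The key claim to establish is that with probability $1-o(1)$ over $\mathcal G$, a positive proportion of the switches are \emph{pivotal}: flipping $\xi_j$ changes $\chi(G_{n,\frac12})$ with conditional probability at least $\delta$ for an absolute constant $\delta>0$. Granting this, $F_{\mathcal G}$ is a monotone $1$-Lipschitz function of $m$ independent fair bits with $\Omega(m)$ influential coordinates, and standard anti-concentration for such functions — an FKG/martingale argument, or a local limit estimate once the pivotal coordinates are seen to behave like a near-binomial count — yields $\Pr(\chi(G_{n,\frac12})=k\mid\mathcal G)\le n^{o(1)}/\sqrt m=n^{-\frac14+o(1)}$ uniformly in $k$; averaging over $\mathcal G$ finishes the proof. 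Concretely one expects to be able to sandwich $\Phi_-(\mathcal G)+\sum_j\xi_j\le\chi(G_{n,\frac12})\le\Phi_+(\mathcal G)+\sum_j\xi_j$ with $\Phi_+-\Phi_-\le n^{o(1)}$, so that $\{\chi=k\}$ forces $\sum_j\xi_j$ into an interval of length $n^{o(1)}$.

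Designing the switches so that they are pivotal — and simultaneously so, for $\Omega(\sqrt n)$ of them — is the substance of the argument, and rests on the structure of near-optimal colourings of $G_{n,\frac12}$. By Theorems~\ref{theorem:bollobas} and~\ref{theorem:bounds} an optimal colouring uses about $\tfrac{n}{2\log_2 n}$ classes, almost all of size close to the critical value $s^\ast=2\log_2 n-2\log_2\log_2 n-2$. The pairs $\{a_j,b_j\}$ must be embedded into this picture so that, for a typical $\mathcal G$, the revealed graph forces $a_j$ and $b_j$ into a common colour class exactly when $\xi_j=0$, so that turning $\xi_j$ on genuinely costs a colour; and one must ensure these obstructions are independent enough that they neither interact nor are circumvented by a globally different colouring. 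Two things have to be controlled: (i) that $m$ can be pushed up to $n^{1/2-o(1)}$ while each pair remains pivotal with conditional probability bounded away from $0$ and $1$; and (ii) that passing from the structured colourings to $\chi$ itself — an optimisation over \emph{all} colourings — does not allow the switched-on pairs to be absorbed cheaply. For (ii) one uses that a generic vertex fits into a given class of size $\approx s^\ast$ only with probability $\approx n^{-2}$, so it cannot be re-coloured into the existing classes, together with the second-moment estimates underlying Theorem~\ref{theorem:bounds} to handle the atypical colourings.

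I expect (i)--(ii), the simultaneous-pivotality lemma, to be the main obstacle: one has to exhibit $\Omega(\sqrt n)$ independent $\Theta(1)$-scale sources of variation in $\chi(G_{n,\frac12})$, uniformly over the conditioning and without circularly invoking (non-)concentration of the chromatic number on smaller or related instances. It is the quantitative ceiling of this structural analysis — roughly $\sqrt n$ such sources, rather than the $n/\mathrm{polylog}\,n$ the truth would require — that pins the exponent at $\tfrac14$.
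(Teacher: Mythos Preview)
Your proposal has a genuine gap: the pivotality claim is not only unproved, it is false as stated. Fix a pair $\{a_j,b_j\}$ and condition on all other edges. In any optimal colouring of the conditioned graph there are $k\sim n/(2\log_2 n)$ classes, and a given class has size $O(\log n)$; so the probability that a \emph{fixed} non-adjacent pair $a_j,b_j$ lies in the same class of a \emph{given} optimal colouring is $O((\log n)/n)$. Since the edge $a_jb_j$ is pivotal only if $a_j$ and $b_j$ share a class in \emph{every} optimal colouring, $\Pr(\xi_j\text{ pivotal}\mid\mathcal G)=O((\log n)/n)$ with high probability over $\mathcal G$. Consequently, among $m=n^{1/2-o(1)}$ fixed-in-advance switches the expected number of pivotal ones is $O(n^{-1/2+o(1)})=o(1)$, not $\Omega(m)$. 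Your heuristic that ``the revealed graph forces $a_j$ and $b_j$ into a common colour class'' has it backwards: typically they are \emph{not} in a common class, so adding the edge costs nothing. Choosing the pairs adaptively after seeing $\mathcal G$ would destroy the independence structure you need for the anti-concentration step, and the sandwich $\Phi_-+\sum_j\xi_j\le\chi\le\Phi_++\sum_j\xi_j$ is strictly stronger still and has no justification. Finally, your target $\max_k\Pr(\chi=k)\le n^{-1/4+o(1)}$ for \emph{all} $n$ is stronger than the theorem: the paper proves non-concentration only along a subsequence, and whether a lower bound on $l_n$ holds for all large $n$ is posed there as an open problem.

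The paper's argument is entirely different and does not attempt to locate pivotal edges. It compares $\chi(G_{n,\frac12})$ with $\chi(G_{n',\frac12})$ for $n'=n+ar$, where $a\approx 2\log_2 n$ and $r\approx\sqrt{\E X_a}$, and constructs a coupling in which the larger graph is the smaller one together with $r$ extra disjoint independent $a$-sets; hence $\chi(G_{n',\frac12})\le\chi(G_{n,\frac12})+r$. Combined with the asymptotic $s_n=f(n)+o(n/\log^2 n)$ from Theorem~\ref{theorem:bounds} and the calculation $f(n')-f(n)>r+\Theta(r/\log n)$, this forces the concentration interval to be long at some $n$ along a carefully chosen sequence (the iteration is needed to beat the $o(n/\log^2 n)$ error term). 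The fluctuation is carried not by single edges but by the Poisson-distributed count $X_a$ of maximum independent sets, whose standard deviation is $n^{x(n)/2}$ with $x(n)$ chosen close to $\tfrac12$; that is where the exponent $\tfrac14$ comes from.
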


 The proof of Theorem \ref{theorem:nonconcentration} is based on the close relationship between the chromatic number and the number of maximum independent sets in $G_{n, \frac 12}$. 
 For a graph $G$, we denote by $\alpha(G)$ the \emph{independence number} of $G$, that is, the size of the largest independent vertex set. The independence number of $G_{n, \frac 12}$ is very well studied: let
\begin{equation} \label{eq:adef}
 \alpha_0 = \alpha_0(n)= 2 \log_2 n - 2 \log_2 \log_2 n +2 \log_2 \left( e/2 \right)+1 \,\,\text{ and }\,\, a=a(n)=\left \lfloor \alpha_0 \right \rfloor,
\end{equation}
then it follows from the work of Matula \cite{matula1970complete,matula1972employee} and Bollob\'as and Erd\H os \cite{erdoscliques} that whp $\alpha(G_{n, \frac 12}) = \left\lfloor \alpha_0+o(1)\right \rfloor$, pinning down $\alpha(G_{n, \frac 12})$ to at most two consecutive values. In fact, for most $n$, whp $\alpha(G_{n, \frac 12}) = a$. 

In the following, we will call a set of vertices of size $a$ an \emph{$a$-set}. Let $X_a$ denote the number of independent $a$-sets in $G_{n, \frac 12}$, then the distribution of $X_a$ is known to be approximately Poisson with mean $\mu=\mathbb{E}[X_a]$ (for details see Section \ref{section:preliminaries}). We will see in Section \ref{section:preliminaries} that
\[
 \mu = {n \choose a} \left(\frac 12\right)^{{a \choose 2}}=n^{x} \,\,\text { where }\,\, o(1) \leqslant x(n)\leqslant 1+o(1).
\]
 In particular, $X_{a}$ is not whp contained in any sequence of intervals of length shorter than $\sqrt{\mu}=n^{x/2}$.

Note that $\chi(G_{n, \frac 12})$ is closely linked to $\alpha(G_{n, \frac 12})$. The lower bound in Theorem \ref{theorem:bollobas} comes from the simple relationship $\chi(G) \ge n / \alpha(G)$ which holds for any graph $G$ on $n$ vertices, and Theorem \ref{theorem:bounds} implies that the average colour class size in an optimal colouring of $G_{n, \frac 12}$ is 
\[\alpha_0-1-\frac 2 {\log 2}+o(1) \approx \alpha_0 - 3.89.\]
It is plausible that an optimal colouring of $G_{n, \frac 12}$ contains all or almost all independent $a$-sets. This is because, amongst all possible choices of colour class sizes for a fixed number of colours $k \approx \frac{n}{2 \log_2 n}$, the expected number of colourings is maximised if all or almost all $a$-sets are included. This intuition indicates $\chi(G_{n, \frac 12})$ should vary at least 
as much as $X_a$ (up to a logarithmic factor). We show that this is indeed the case for \emph{some} values $n$ where $x=x(n)$ is close to $\frac 12$.

\section{Proof of Theorem \ref{theorem:nonconcentration}}
\subsection{Outline}
Suppose that $[s_n, t_n]$ is a sequence of intervals so that whp, $\chi(G_{n, \frac 12}) \in [s_n, t_n]$. In light of Theorem~\ref{theorem:bounds}, we may assume that
\begin{equation}s_n = f(n)+o\left(\frac{n}{\log^2 n}\right) \,\,\text{ where }\,\, f(n)=\frac{n}{2 \log_2 n-2 \log_2 \log_2 n - 2}.\label{eq:an}
\end{equation}
(It is clear that $s_n \le f(n) + o\left(\frac{n}{\log^2 n}\right)$, but if (\ref{eq:an}) does not hold, we may replace $s_n$ with some larger $s_n' = f(n)+o\left(\frac{n}{\log^2 n}\right)$, which can only shorten the interval lengths.) Let
\[
 l_n=t_n - s_n
\]
denote the \emph{interval length}, and fix $c \in \left(0, \frac 14\right)$. We will show that there is \emph{some} $n^* \ge 1$ such that
\[
 l_{n^*} \ge \left( n^*\right) ^c,
\]
which suffices to prove Theorem \ref{theorem:nonconcentration}.

We start in Section \ref{section:preliminaries} with the Poisson approximation of the distribution of $X_a$ and some technical lemmas. In Section \ref{section:typicalvalues}, we fix $\epsilon>0$ and pick some $n$ with $x=x(n) < \frac 12-\epsilon$, so that $\mu=\E[X_a]\le n^{\frac 12 -\epsilon}$. It will follow by a first moment argument that whp all independent $a$-sets in $G_{n, \frac 12}$ are \emph{disjoint}.

The proof now relies on comparing the chromatic numbers of $G_{n, \frac 12}$ and $G_{n', \frac 12}$, where $n'$ is slightly larger than $n$. Specifically, let $r=\left \lfloor \sqrt{\mu} \right \rfloor=\left \lfloor n^{x/2} \right \rfloor$ be roughly the standard deviation of $X_a$, let $n'=n+ a r$, and let $X_a'$ be the number of independent $a$-sets in $G_{n', \frac 12}$. In Section \ref{section:typicalvalues}, we will see that we can condition $G_{n, \frac 12}$ and $G_{n', \frac 12}$ on some typical values for $X_a$ and $X_a'$ which differ by exactly $r$, and on all independent $a$-sets being disjoint, so that the chromatic numbers of the conditional random graphs are still in the typical intervals $[s_n, t_n]$ and $[s_{n'}, t_{n'}]$ with significant probability.

In Section \ref{section:coupling}, we construct a coupling of essentially these two conditional random graph distributions so that the conditional $G_{n, \frac 12}$ is an induced subgraph of the conditional $G_{n', \frac 12}$ and their difference can be partitioned into exactly $r$ disjoint independent $a$-sets. Since the chromatic numbers of these two random graphs then differ by at most $r$ and both lie in the intervals $[s_n, t_n]$ and $[s_{n'}, t_{n'}]$ with positive probability, this implies $s_n'\le t_n+r$ or equivalently
\[ l_n\ge s_{n'}-s_n-r.\]
We then use the estimate $s_n=f(n)+o \left( \frac{n}{\log^ 2n} \right)$ given in (\ref{eq:an}). In (\ref{eq:difference}), we will see that
\[
 f(n')-f(n) \ge r+ \Theta\left( \frac r {\log n} \right).
\]
If the error term $o\left(\frac{n}{\log^2 n} \right)$ in (\ref{eq:an}) did not exist, this would immediately imply $l_n \ge \Theta\left( \frac r {\log n} \right)$. To beat the error term, we repeat the argument in Section \ref{section:finishing}  for a sequence $n_1<n_2<n_3<...$ of integers. Carefully checking that our assumptions remain valid throughout, we find some $n^* \geqslant n$ such that $l_{n^*}\geqslant \Theta \left(\frac{r^*}{\log n^*}\right) \geqslant \left(n^{*}\right)^{c}$.

\subsection{Preliminaries} \label{section:preliminaries}
Recall that $a=\left \lfloor \alpha_0 \right \rfloor$, where $\alpha_0=\alpha_0(n)$ is given in (\ref{eq:adef}), and that we denote by $X_a$ the number of independent $a$-sets in $G_{n, \frac 12}$, letting $\mu=\E[X_a]={n \choose a} \left(\frac 12\right)^ {a \choose 2}$. 
A standard calculation shows that for any function $h=h(n)=O(1)$ such that $\alpha_0-h$ is an integer, the expected number of independent sets of size $\alpha_0-h$ in $G_{n, \frac 12}$ is $n^{h+o(1)}$ (see \S3.c in \cite{mcdiarmid1989method}). Therefore, as $\alpha_0-1 < a \le \alpha_0$,
\begin{equation}\label{eq:xdef}
 \mu = n^x \,\,\text{ for some function }\,\, x=x(n) \in [o(1), 1+o(1)]
\end{equation}
which satisfies \begin{equation} \label{eq:xproperty}
x= \alpha_0-a+o(1)=\alpha_0-\left \lfloor \alpha_0 \right \rfloor +o(1).
                     \end{equation}

Since $\alpha_0(n)\rightarrow \infty$ and $(\alpha_0(n+1)-\alpha_0(n)) \rightarrow 0$ as $n \rightarrow \infty$, the following lemma is immediate.
\begin{lemma} \label{lemma:choiceofn}
Let $0 \le c_1 < c_2 \le 1$ and $N>0$. There is an integer $n \ge N$ such that
\[
 x(n) \in (c_1, c_2),
\]
where $x(n)$ is given by (\ref{eq:xdef}).\qed
\end{lemma}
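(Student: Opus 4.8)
The plan is to reduce the statement to a density fact about the fractional part of $\alpha_0(n)$. By (\ref{eq:xproperty}) we have $x(n) = \alpha_0(n) - \lfloor \alpha_0(n)\rfloor + o(1)$, i.e.\ $x(n)$ agrees up to an $o(1)$ error with the fractional part $\{\alpha_0(n)\} := \alpha_0(n)-\lfloor\alpha_0(n)\rfloor$. Hence it suffices to find an integer $n\ge N$, large enough that the $o(1)$ error is negligible, for which $\{\alpha_0(n)\}$ lies comfortably inside $(c_1,c_2)$.

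First I would shrink the target: since $(c_1,c_2)\subseteq[0,1]$ is a nonempty open interval, fix a closed subinterval $[c_1',c_2']\subset(c_1,c_2)$ with $0<c_1'<c_2'<1$ and with a definite gap $\delta>0$ on each side, so that $[c_1'-\delta,c_2'+\delta]\subset(c_1,c_2)$. Then choose $M\ge N$ large enough that, for every $n\ge M$, the $o(1)$ error term in (\ref{eq:xproperty}) has absolute value less than $\delta$, and also $|\alpha_0(n+1)-\alpha_0(n)|<c_2'-c_1'$; both are possible because the error is $o(1)$ and $\alpha_0(n+1)-\alpha_0(n)\to0$.

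Next, using $\alpha_0(n)\to\infty$, pick an integer $k$ with $k+c_1'>\alpha_0(M)$, and let $n_1:=\min\{n\ge M:\alpha_0(n)>k+c_1'\}$, which is well defined and satisfies $n_1>M$. By minimality $\alpha_0(n_1-1)\le k+c_1'$, and since $n_1-1\ge M$ the step bound gives $\alpha_0(n_1)<\alpha_0(n_1-1)+(c_2'-c_1')\le k+c_2'$. Thus $\alpha_0(n_1)\in(k+c_1',k+c_2')\subset(k,k+1)$, so $\lfloor\alpha_0(n_1)\rfloor=k$ and $\{\alpha_0(n_1)\}\in(c_1',c_2')$. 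Finally, by the choice of $M$ and (\ref{eq:xproperty}), $x(n_1)\in(c_1'-\delta,\,c_2'+\delta)\subset(c_1,c_2)$, with $n_1\ge M\ge N$, which proves the lemma.

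This is essentially bookkeeping and there is no real obstacle; the only point requiring a little care is that the $o(1)$ slack in (\ref{eq:xproperty}) must be absorbed, which is why the target interval is shrunk to $[c_1',c_2']$ (with margin $\delta$) before searching for $n$, and why $c_1',c_2'$ are kept strictly inside $(0,1)$ so that landing $\alpha_0(n_1)$ in $(k+c_1',k+c_2')$ pins down its integer part.
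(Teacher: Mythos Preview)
Your proof is correct and follows exactly the approach the paper indicates: the paper simply notes that the lemma is immediate from $\alpha_0(n)\to\infty$ and $\alpha_0(n+1)-\alpha_0(n)\to 0$, and you have carefully written out what ``immediate'' means here, including the bookkeeping needed to absorb the $o(1)$ error from (\ref{eq:xproperty}).
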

The Stein-Chen method (see for example \S4 in \cite{ross2011fundamentals}) can be used to obtain some very accurate information about the distribution of $X_a$. For this, if $W,Z$ are two random variables taking values in a countable set $\Omega$, let
\[
\dtv(W,Z) = \sup_{A \subset \Omega} \left|\Pb(W \in A) - \Pb(Z \in A) \right|
\]
denote their \emph{total variation distance}. For $\lambda>0$, denote by $\Poi_\lambda$ the \emph{Poisson distribution} with mean~$\lambda$. The following lemma is a special case of Theorem 11.9 in \cite{bollobas:randomgraphs}.
\begin{lemma}\label{lemma:poissonapprox}Let $Z \sim \Poi_{\mu}$, then if $\mu\ge1$, 
\begin{equation*}
 \dtv(X_a,Z)  =O \Big(\mu(\log n)^4/n^2+(\log n)^3/n \Big)=o(1).
\end{equation*}\qed
\end{lemma}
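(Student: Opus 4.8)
The final displayed statement is Lemma~\ref{lemma:poissonapprox}: for $Z\sim\Poi_\mu$ with $\mu\ge1$,
\[
 \dtv(X_a,Z)=O\big(\mu(\log n)^4/n^2+(\log n)^3/n\big)=o(1).
\]

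**Plan of attack.** The plan is to apply the Stein--Chen method for Poisson approximation of a sum of indicator variables with a dependency structure — here $X_a=\sum_{S}\mathbbm 1[S\text{ independent}]$ ranges over all $a$-sets $S\subset[n]$. Write $p_S=\Pb(S\text{ independent})=(1/2)^{\binom a2}$, so $\mu=\sum_S p_S=\binom na(1/2)^{\binom a2}$. The key feature is that the event ``$S$ is independent'' depends only on the edges inside $S$, so two indicators are independent precisely when $|S\cap T|\le1$. For each $S$ one takes the dependency neighbourhood $B_S=\{T:|S\cap T|\ge2\}$. The standard Stein--Chen bound (Theorem~11.9 in \cite{bollobas:randomgraphs}, which covers exactly this situation) gives
\[
 \dtv(X_a,Z)\le \min(1,\mu^{-1})\Big(\sum_S\sum_{T\in B_S}p_Sp_T+\sum_S\sum_{T\in B_S,\,T\ne S}p_{ST}\Big),
\]
where $p_{ST}=\Pb(S\text{ and }T\text{ both independent})$. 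Since $\mu\ge1$ the prefactor is $1/\mu$, and the task reduces to estimating the two double sums, conventionally called $\Delta_1$ and $\Delta_2$.

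**The two sums.** For $\Delta_1=\sum_S\sum_{T\in B_S}p_Sp_T$: fix $S$, and count $T$ with $|S\cap T|=i$ for $2\le i\le a$; there are $\binom ai\binom{n-a}{a-i}$ such $T$, each contributing $p_Sp_T=(1/2)^{2\binom a2}$. Summing over $S$ gives
\[
 \Delta_1=\binom na\Big(\tfrac12\Big)^{2\binom a2}\sum_{i=2}^{a}\binom ai\binom{n-a}{a-i}
 =\mu^2\sum_{i=2}^{a}\frac{\binom ai\binom{n-a}{a-i}}{\binom na},
\]
so $\Delta_1/\mu = \mu\sum_{i\ge2}\binom ai\binom{n-a}{a-i}/\binom na$. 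The dominant term is $i=2$: $\binom a2\binom{n-a}{a-2}/\binom na$ is of order $(a^2/2)\cdot a^2/n^2=\Theta(a^4/n^2)$, and since $a=\Theta(\log n)$ this contributes $\Theta(\mu(\log n)^4/n^2)$; the terms $i\ge3$ decay by further factors of roughly $(a^2/n)$ each and are lower order, so $\Delta_1/\mu=O(\mu(\log n)^4/n^2)$, the first term in the claimed bound. For $\Delta_2=\sum_S\sum_{T\in B_S,T\ne S}p_{ST}$: when $|S\cap T|=i\ge2$, the union $S\cup T$ has $2a-i$ vertices, and $S,T$ both independent means all edges inside $S$ and inside $T$ are absent; the number of such edge slots is $2\binom a2-\binom i2$, so $p_{ST}=(1/2)^{2\binom a2-\binom i2}$. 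Hence
\[
 \Delta_2=\binom na\Big(\tfrac12\Big)^{2\binom a2}\sum_{i=2}^{a}\binom ai\binom{n-a}{a-i}\,2^{\binom i2}
 =\mu^2\sum_{i=2}^{a}\frac{\binom ai\binom{n-a}{a-i}}{\binom na}\,2^{\binom i2},
\]
so $\Delta_2/\mu=\mu\sum_{i\ge2}2^{\binom i2}\binom ai\binom{n-a}{a-i}/\binom na$. Again the $i=2$ term dominates: $2^{\binom 22}=2$, so it is just twice the $i=2$ term of $\Delta_1/\mu$, of order $\mu(\log n)^4/n^2$. The subtlety is checking the tail: the factor $2^{\binom i2}$ grows fast, but it is fought by $\binom ai\binom{n-a}{a-i}/\binom na$, which is roughly $n^{-i}$ times bounded factors; since $\mu=n^{x}\le n^{1+o(1)}$ and, critically, $2^{\binom i2}(1/2)^{\binom a2}\le\mu/\binom na$ worth of weight translates (using $\mu\,2^{\binom i2}/n^{i}$-type bounds and $a\sim 2\log_2 n$) into a geometrically decaying series, so the sum is $O$ of its first term. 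Actually one must be a little careful near $i\approx a$, where $2^{\binom i2}$ is enormous; there the binomial $\binom{n-a}{a-i}$ becomes $\binom{n-a}{0}=1$ at $i=a$ and $p_{SS}=p_S$, but that single ``$T=S$'' term is excluded, and the nearby terms are controlled because $\mu\le n^{1+o(1)}$ forces $(1/2)^{\binom a2}\ge n^{-1-o(1)}/\binom na\ge n^{-a-1-o(1)}$, which beats $2^{\binom i2}$ for $i$ close to $a$ only after multiplying by the small $\binom{n-a}{a-i}$; writing out $\binom ai\binom{n-a}{a-i}2^{\binom i2}(1/2)^{\binom a2}\binom na$ and comparing consecutive ratios shows the whole sum is $O(\mu a^4/n^2)+O(a^2/n)$, the second term coming from the $i=2,3$ regime when $\mu$ is small, matching the $(\log n)^3/n$ term.

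**Main obstacle.** The genuinely delicate step is bounding the tail of $\Delta_2/\mu$: the weights $2^{\binom i2}$ grow super-exponentially in $i$, and one has to verify that the product $2^{\binom i2}\binom ai\binom{n-a}{a-i}/\binom na$ is dominated by its $i=2$ value (times a small factor) uniformly over $2\le i\le a$, using only $a\sim 2\log_2 n$ and $\mu=n^{x}$ with $x\le 1+o(1)$. The clean way is to bound the ratio of the $(i{+}1)$-st term to the $i$-th term: it is $\frac{2^{i}(a-i)^2}{(i+1)(n-2a+i+1)}\cdot(1+o(1))$-ish, which for $i$ in a middle range is small, but for $i$ near $a$ one instead uses that $\binom{n-a}{a-i}$ shrinks faster than $2^{\binom i2}$ grows once we fold in the global normalisation $(1/2)^{\binom a2}\binom na\le n^{1+o(1)}$. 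Handling both ends of the range $i\in\{2,\dots,a\}$ simultaneously, and collecting the result into the two stated error terms $\mu(\log n)^4/n^2$ and $(\log n)^3/n$, is the crux; everything else is substituting $a=\Theta(\log n)$ into elementary binomial estimates. Since this is precisely the content of Theorem~11.9 in \cite{bollobas:randomgraphs} specialised to $k$-independent sets, one may simply cite it, but the above is the computation that underlies it.
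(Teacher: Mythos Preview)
Your proposal is correct and matches the paper's approach: the paper does not prove this lemma at all but simply states it as a special case of Theorem~11.9 in \cite{bollobas:randomgraphs} (note the \qed\ immediately after the statement), and you cite the same theorem while additionally sketching the Stein--Chen computation that underlies it. Your sketch of the $\Delta_1,\Delta_2$ estimates is accurate in outline, if somewhat informal on the tail of $\Delta_2$; since the paper is content with a bare citation, your write-up already goes beyond what is required.
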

We will also need a technical lemma about the Poisson distribution, a proof is given in the appendix.  
 For an integer $k$ and $\mc A \subset \Z$, let $\mc A-k = \{a-k \mid a \in \mc{A}\} $. 
\begin{lemma} \label{lemma:technicalPoisson} 
 Let $(\lambda_n)_n$ be a sequence with $0<\lambda_n \rightarrow \infty$, and suppose that $(\mc{B}_n)_n$ is a sequence of integer sets such that
 \[
  \Poi_{\lambda_n}\left(\mc{B}_n \right) \rightarrow 0.
 \]
Then also
\[
 \Poi_{\lambda_n}\left(\mc{B}_n-\left \lfloor\sqrt{\lambda_n} \right \rfloor\right) \rightarrow 0.
\]\qed
\end{lemma}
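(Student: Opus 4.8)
The plan is to fix an integer $K\ge 1$ and show that $\limsup_{n\to\infty}\Poi_{\lambda_n}(\mc B_n-r_n)\le 1/K^2$, where $r_n=\lfloor\sqrt{\lambda_n}\rfloor$; since $K$ is arbitrary this forces the limsup to be $0$, and the quantity being nonnegative this gives the lemma. First I would split $\Z$ into the central window $\mc I_n=\{k\in\Z:|k-\lambda_n|<K\sqrt{\lambda_n}\}$ and its complement. On the complement, Chebyshev's inequality applied to a $\Poi_{\lambda_n}$-distributed variable (mean $\lambda_n$, variance $\lambda_n$) gives $\Poi_{\lambda_n}(\Z\setminus\mc I_n)\le 1/K^2$, uniformly in $n$ once $\lambda_n\ge 1$. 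So it remains to control the mass that $\Poi_{\lambda_n}$ assigns to $(\mc B_n-r_n)\cap\mc I_n$.

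The crux is a pointwise ratio bound on the window. For any integer $k\ge 0$,
\[
\frac{\Poi_{\lambda_n}(k)}{\Poi_{\lambda_n}(k+r_n)}=\frac{(k+r_n)!}{k!\,\lambda_n^{\,r_n}}=\prod_{i=1}^{r_n}\frac{k+i}{\lambda_n},
\]
and if $k\in\mc I_n$ then $k+i\le\lambda_n+K\sqrt{\lambda_n}+r_n\le\lambda_n+(K+1)\sqrt{\lambda_n}$ for every $1\le i\le r_n$, so each factor is at most $1+(K+1)/\sqrt{\lambda_n}$, and, as there are $r_n\le\sqrt{\lambda_n}$ of them, the product is at most $\bigl(1+(K+1)/\sqrt{\lambda_n}\bigr)^{\sqrt{\lambda_n}}\le e^{K+1}$. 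Hence $\Poi_{\lambda_n}(k)\le e^{K+1}\Poi_{\lambda_n}(k+r_n)$ for all $k\in\mc I_n$ (negative $k$ contribute nothing, since $\Poi_{\lambda_n}(k)=0$ there). Summing over $k\in(\mc B_n-r_n)\cap\mc I_n$ and using that $k\mapsto k+r_n$ maps $\mc B_n-r_n$ bijectively onto $\mc B_n$,
\[
\Poi_{\lambda_n}\bigl((\mc B_n-r_n)\cap\mc I_n\bigr)\le e^{K+1}\sum_{k\in(\mc B_n-r_n)\cap\mc I_n}\Poi_{\lambda_n}(k+r_n)\le e^{K+1}\,\Poi_{\lambda_n}(\mc B_n)\to 0.
\]
Combining the two pieces yields $\Poi_{\lambda_n}(\mc B_n-r_n)\le e^{K+1}\,\Poi_{\lambda_n}(\mc B_n)+1/K^2$, and letting $n\to\infty$ and then $K\to\infty$ completes the argument.

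The one genuine obstacle is that no \emph{uniform} ratio bound $\Poi_{\lambda_n}(k)\le C\,\Poi_{\lambda_n}(k+r_n)$ can hold over all $k$: in the large-deviation regime $k\gg\lambda_n$ the product $\prod_{i=1}^{r_n}(k+i)/\lambda_n$ is exponentially large in $r_n$. The resolution — and the only slightly delicate calibration — is to first restrict to a window of width $\Theta(\sqrt{\lambda_n})$, where a product of $r_n=\Theta(\sqrt{\lambda_n})$ factors each equal to $1+O(1/\sqrt{\lambda_n})$ telescopes to an absolute constant, while the discarded tail is absorbed by the crude second-moment estimate; here it matters that the shift $r_n\le\sqrt{\lambda_n}$ stays comfortably inside the window half-width $K\sqrt{\lambda_n}$. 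Everything else is bookkeeping.
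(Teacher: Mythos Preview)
Your proof is correct and follows essentially the same approach as the paper: split into a central window of width $\Theta(\sqrt{\lambda_n})$ where the pointwise ratio $\Poi_{\lambda_n}(k)/\Poi_{\lambda_n}(k+r_n)$ is bounded by an absolute constant via the product formula, and handle the complement by Chebyshev. The only cosmetic differences are that the paper places the window on $\mc B_n$ before shifting (rather than on $\mc B_n - r_n$) and phrases the argument in $\epsilon$--$\delta$ form rather than via a $\limsup \le 1/K^2$ estimate.
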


\subsection{Selection of typical values} \label{section:typicalvalues}
Now let $0<\epsilon<\frac 14$ be fixed, and suppose that $n$ is an integer such that
\begin{equation}\label{eq:xbound}
 \epsilon < x =x(n) < \frac 12 -{\epsilon},
\end{equation}
or equivalently $n^{\epsilon } < \mu < n^{\frac12 -\epsilon}$ (infinitely many such values $n$ exist by Lemma \ref{lemma:choiceofn}). Let 
\[r=  \left \lfloor n^{x/2} \right \rfloor \text{ and }  n'=n+ra.\]
For the rest of subsections \ref{section:typicalvalues} and \ref{section:coupling}, whenever we write $a$, $\alpha_0$, $\mu$ and $x$, this refers to $a(n)$, $\alpha_0(n)$, $\mu(n)$ and $x(n)$, respectively. Let  $a'=a(n')$, $\alpha_0'=\alpha_0(n')$ and $\mu'=\mu(n')$ (where $\mu'$ is defined with respect to $a'$).

Note that 
\[\alpha_0'=\alpha_0+O\left(\frac{r a}{n}\right)=\alpha_0+o(1).\]
In particular, as $\alpha_0-a=x+o(1)$ is bounded away from $0$ and $1$ by (\ref{eq:xproperty}) and (\ref{eq:xbound}), if $n$ is large enough we have
\[a'=\left \lfloor \alpha_0'\right \rfloor =\left \lfloor \alpha_0\right \rfloor=a.\]

As $\mu=n^x$, $r=O(n^{x/2})$, $a=O(\log n)$ and $x< \frac 12$, if $n$ is large enough,
\[
 \mu'={n' \choose a} \left( \frac 12\right)^{a \choose 2}=\mu \, \prod_{i=0}^{a-1} \frac{n'-i}{n-i} =\mu\left(1+O\left(\frac{ra}{n}\right)\right)^a=\mu \left(1+ O\left( \frac{r a^2}{n}  \right)\right) = \mu+o(1).
\]
In particular, an easy (and well-known) calculation shows that
\[
 \dtv(\Poi_\mu, \Poi_{\mu'}) =o(1).
\]
Recall that $X_a'$ denotes the number of independent $a$-sets in $G_{n', \frac 12}$, then together with Lemma \ref{lemma:poissonapprox}, it follows that $X_a$ and $X_a'$ have essentially the same distribution.
\begin{lemma} $\dtv(X_a, \Poi_{\mu})=o(1)$ and $\dtv(X_a', \Poi_{\mu})=o(1)$.\label{eq:poissonkonkret}
\qed
\end{lemma}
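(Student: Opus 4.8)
The plan is to deduce both estimates from Lemma \ref{lemma:poissonapprox}, applied once with parameter $n$ and once with parameter $n'$, together with the bound $\dtv(\Poi_{\mu}, \Poi_{\mu'}) = o(1)$ established above and the triangle inequality for the total variation distance.

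First I would treat $X_a$. By (\ref{eq:xbound}) we have $\mu = n^{x} > n^{\epsilon} \geq 1$ for $n$ large, so Lemma \ref{lemma:poissonapprox} applies and gives $\dtv(X_a, \Poi_{\mu}) = O\big(\mu (\log n)^4/n^2 + (\log n)^3/n\big)$. Since $\mu < n^{1/2-\epsilon}$ by (\ref{eq:xbound}), the first summand is $O\big(n^{-3/2-\epsilon}(\log n)^4\big) = o(1)$ and the second is $o(1)$, which is the first claim.

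Next I would treat $X_a'$. Since we have already checked that $a(n') = a$, the random variable $X_a'$ is exactly the count of $a(n')$-sets in $G_{n', \frac12}$ to which Lemma \ref{lemma:poissonapprox} applies with parameter $n'$, and the corresponding mean is $\mu' = \E[X_a'] = \mu + o(1) \geq 1$. Moreover $n' = n + ra$ with $ra = O(n^{x/2}\log n) = o(n)$, so $n' = n(1+o(1))$ and $\log n' \sim \log n$; thus Lemma \ref{lemma:poissonapprox} yields $\dtv(X_a', \Poi_{\mu'}) = O\big(\mu'(\log n')^4/(n')^2 + (\log n')^3/n'\big) = o(1)$ by the same estimate as before. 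Combining this with $\dtv(\Poi_{\mu}, \Poi_{\mu'}) = o(1)$ gives
\[
\dtv(X_a', \Poi_{\mu}) \;\leq\; \dtv(X_a', \Poi_{\mu'}) + \dtv(\Poi_{\mu'}, \Poi_{\mu}) \;=\; o(1),
\]
which is the second claim.

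This lemma is essentially bookkeeping on top of Lemma \ref{lemma:poissonapprox}, so I do not expect a genuine obstacle. The only points that require a moment's attention are that Lemma \ref{lemma:poissonapprox} is legitimately applicable to the count of $a$-sets in $G_{n', \frac12}$ — which is immediate once one knows $a(n') = a$, as verified above — and that the explicit error terms in that lemma really do tend to $0$, which follows from the hypothesis $\mu < n^{1/2-\epsilon}$ in (\ref{eq:xbound}) together with $\mu' = \mu + o(1)$ and $n' = n(1+o(1))$.
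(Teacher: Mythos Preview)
Your proposal is correct and follows precisely the route the paper intends: the paper states the lemma with a \qed immediately after establishing $a'=a$, $\mu'=\mu+o(1)$ and $\dtv(\Poi_\mu,\Poi_{\mu'})=o(1)$, noting that the result follows from these facts together with Lemma~\ref{lemma:poissonapprox}. Your argument spells out exactly this, including the triangle-inequality step for $X_a'$; the only difference is that you re-verify the $o(1)$ bound in Lemma~\ref{lemma:poissonapprox} using $\mu<n^{1/2-\epsilon}$, which is harmless but unnecessary since $\mu\le n^{1+o(1)}$ already suffices.
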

We would like to compare the chromatic numbers of $G_{n, \frac 12}$ and $G_{n', \frac 12}$, each conditioned on having ``typical'' numbers of independent $a$-sets which differ by exactly $r$, and conditioned on the event that all independent $a$-sets are disjoint (which holds whp). The content of the following lemma is that we can pick two such typical values for $X_a$ and $X_a'$ so that, after conditioning, the chromatic numbers of $G_{n, \frac 12}$ and $G_{n', \frac 12}$ still lie in their typical intervals with significant probability.
\begin{lemma} \label{lemma:valueA}
Let $G \sim G_{n, \frac 12}$ and $G' \sim G_{n', \frac 12}$. 
Let $\mc{E}$ and $\mc{E}'$ be the events that all independent $a$-sets in $G$  and $G'$ are disjoint, respectively. Then, if $n$ is large enough, there is an integer $A=A(n) \in [\frac{1}{2}n^x, 2n^{x}]$   
such that
\begin{align*}
 \Pb &\left( \chi(G) \in [s_{n}, t_{n}] \,\, \big| \,\, \{X_a = A\} \cap \mc{E} \right) > \frac 34 \,\,\text{ and }\\
\Pb &\left( \chi(G') \in [s_{n'}, t_{n'}] \,\, \big| \,\, \{X_a'=A+r\} \cap \mc{E}'\right) > \frac 34 .
\end{align*}
\end{lemma}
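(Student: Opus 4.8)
The plan is to use the Poisson approximation from Lemma~\ref{eq:poissonkonkret} together with the fact that conditioning on a single ``not-too-atypical'' value of $X_a$ (respectively $X_a'$) costs only a polynomial factor in probability, so that an event of probability $o(1)$ (like leaving the interval $[s_n,t_n]$) remains unlikely after the conditioning --- provided we pick the value carefully. First I would record that $\Pb(X_a \in [\tfrac12 n^x, 2n^x])$ and $\Pb(X_a' \in [\tfrac12 n^x + r, 2n^x + r])$ are both bounded below by some constant (in fact close to $1$ once $\mu = n^x \to \infty$, since a $\Poi_\mu$ variable concentrates on $\mu(1 + o(1))$ and $r = o(\mu^{1/2+o(1)}) \cdot$, i.e.\ $r/\mu \to 0$); this uses Lemma~\ref{eq:poissonkonkret} and $\epsilon < x$. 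Likewise $\Pb(\mathcal E), \Pb(\mathcal E') \to 1$: this is the ``all independent $a$-sets are disjoint'' claim promised in the outline, which follows from a first moment bound on the number of pairs of independent $a$-sets sharing at least one vertex --- the expected number of such intersecting pairs is $O(\mu^2 a^2 / n) = O(n^{2x} \log^2 n / n) = o(1)$ since $x < \tfrac12$.

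Next I would set up an averaging argument. Let $\mathcal{A}$ be the set of integers $A \in [\tfrac12 n^x, 2n^x]$, and for each such $A$ consider the ``bad'' probabilities
\[
 p_A = \Pb\big(\chi(G) \notin [s_n, t_n] \,\big|\, \{X_a = A\} \cap \mathcal E\big), \qquad
 q_A = \Pb\big(\chi(G') \notin [s_{n'}, t_{n'}] \,\big|\, \{X_a' = A + r\} \cap \mathcal E'\big).
\]
It suffices to find one $A \in \mathcal A$ with $p_A < \tfrac14$ and $q_A < \tfrac14$. By the union bound it is enough that $\sum_{A \in \mathcal A}(p_A + q_A) < \tfrac14 |\mathcal A|$, i.e.\ that the averages of $p_A$ and of $q_A$ over $\mathcal A$ are both $o(1)$ (they will be far below $\tfrac18$). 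For the average of $p_A$: since $\Pb(\chi(G) \notin [s_n, t_n]) = o(1)$ by the definition of $[s_n, t_n]$ and $\Pb(\mathcal E) = 1 - o(1)$, we have $\Pb(\{\chi(G) \notin [s_n,t_n]\} \cap \mathcal E) = o(1)$, and hence
\[
 \sum_{A \in \mathcal A} p_A \cdot \Pb(\{X_a = A\}\cap \mathcal E)
 \;\le\; \Pb\big(\{\chi(G) \notin [s_n,t_n]\} \cap \mathcal E\big) = o(1).
\]
Now $\Pb(\{X_a = A\} \cap \mathcal E) = \Pb(X_a = A) - o(1)\cdot$ (more precisely, it is $\Pb(X_a=A)(1-o(1))$ uniformly, or one simply uses $\Pb(\{X_a=A\}\cap\mathcal E)\ge \Pb(X_a=A)-\Pb(\overline{\mathcal E})$), and by the Poisson local estimate $\Pb(X_a = A) = \Pr(\Poi_\mu = A) + o(1/\mu) = \Theta(\mu^{-1/2})$ uniformly for $A$ in the central range $[\tfrac12 n^x, 2n^x]$. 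So $\sum_{A\in\mathcal A} p_A = O(\mu^{1/2}) \cdot o(1)$; this is not automatically $o(|\mathcal A|)$ since $|\mathcal A| = \Theta(\mu)$ and we only gain $\mu^{1/2}$ --- but dividing, the average $\tfrac1{|\mathcal A|}\sum p_A$ is $O(\mu^{-1/2}) \cdot o(1) = o(1)$, as required. The same computation handles $q_A$, using $\dtv(X_a', \Poi_\mu) = o(1)$ from Lemma~\ref{eq:poissonkonkret} and $\Pb(\mathcal E') = 1 - o(1)$, and noting that $A + r$ ranges over the central range of $\Poi_\mu$ as well since $r/\mu \to 0$. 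Combining, for $n$ large both averages are below $\tfrac18$, so some $A$ works for both simultaneously.

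The main obstacle is the bookkeeping around the conditioning on $\mathcal E$, $\mathcal E'$: one must be careful that $\Pb(\{X_a = A\} \cap \mathcal E)$ is comparable to $\Pb(X_a = A)$ uniformly over the relevant range of $A$, rather than just on average, so that dividing the ``$o(1)$ total bad mass'' by the number of values $A$ genuinely produces an $o(1)$ average of the \emph{conditional} bad probabilities $p_A$. This is where the quantitative strength of ``$\Pb(\overline{\mathcal E}) = o(1)$'' matters: as long as $\Pb(\overline{\mathcal E}) = o(\mu^{-1/2})$ --- which the first moment bound $O(n^{2x-1}\log^2 n)$ easily gives, since $2x - 1 < -x < -x/2$ --- the term $\Pb(\overline{\mathcal E})$ is negligible compared to the individual point probabilities $\Pb(X_a = A) = \Theta(\mu^{-1/2})$, and everything goes through. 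A secondary point to check is that the interval endpoints $s_{n'}, t_{n'}$ refer to $n'$, so the ``$\chi \in [s_{n'}, t_{n'}]$ whp'' hypothesis is applied at $n'$, not $n$; since $n' = n + ra$ is a specific integer determined by $n$, and we are proving an existential statement over $n$, this is harmless.
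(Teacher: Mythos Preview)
There is a genuine gap in your averaging argument: the claim that $\Pb(X_a = A) = \Theta(\mu^{-1/2})$ uniformly for $A \in [\tfrac12 n^x, 2n^x] = [\mu/2, 2\mu]$ is false. For $Z \sim \Poi_\mu$, the point masses $\Pb(Z = A)$ are of order $\mu^{-1/2}$ only in a window of width $O(\sqrt{\mu})$ around $\mu$; at $A = \mu/2$ or $A = 2\mu$ they are exponentially small in $\mu$. Hence you cannot divide the identity $\sum_A p_A \,\Pb(\{X_a = A\}\cap\mathcal E) = o(1)$ by a uniform lower bound on $\Pb(\{X_a = A\}\cap\mathcal E)$ over this wide range to obtain $\sum_{A \in \mathcal A} p_A = o(\mu^{1/2})$. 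There is also a secondary problem: the chain ``$2x-1 < -x < -x/2$'' you use to justify $\Pb(\overline{\mathcal E}) = o(\mu^{-1/2})$ presupposes $x < 1/3$, whereas the standing hypothesis is only $x < \tfrac12 - \epsilon$; indeed $\Pb(\overline{\mathcal E}) = O(n^{2x-1+o(1)})$ is $o(n^{-x/2})$ only when $x < 2/5$, which need not hold here.

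The paper's proof sidesteps both difficulties. First, via the elementary inequality $\Pb(\mathcal A \mid \mathcal B \cap \mathcal C) \ge \Pb(\mathcal A \cap \mathcal B \mid \mathcal C)$ (with $\mathcal A = \{\chi\in[s_n,t_n]\}$, $\mathcal B = \mathcal E$, $\mathcal C = \{X_a = A\}$), it suffices to show $\Pb(\mathcal F \mid X_a = A) > \tfrac34$ for $\mathcal F = \{\chi\in[s_n,t_n]\}\cap\mathcal E$; this removes $\mathcal E$ from the conditioning and requires only $\Pb(\overline{\mathcal E}) = o(1)$. Second, instead of a pointwise local estimate, the paper argues set-wise: letting $\mathcal A, \mathcal A'$ be the sets of ``good'' values for the two conditions, a Markov bound gives $\Pb(X_a \notin \mathcal A) = o(1)$ and $\Pb(X_a' \notin \mathcal A'+r) = o(1)$, hence $\Poi_\mu(\mathcal A) = 1-o(1)$ and $\Poi_\mu(\mathcal A'+r) = 1-o(1)$; the Poisson shift Lemma~\ref{lemma:technicalPoisson} then yields $\Poi_\mu(\mathcal A') = 1 - o(1)$, so $\mathcal A \cap \mathcal A' \cap [\mu/2, 2\mu] \ne \varnothing$. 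Your averaging route can be repaired by restricting to a window $[\mu - K\sqrt{\mu}, \mu + K\sqrt{\mu}]$ for fixed $K$ (where the uniform local bound does hold and the shift by $r \le \sqrt{\mu}$ stays in a comparable window) together with the paper's reduction to drop $\mathcal E$ from the conditioning; note that the latter is essential, since the quantitative bound $\Pb(\overline{\mathcal E}) = o(\mu^{-1/2})$ you rely on is not available for $x$ close to $\tfrac12$.
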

\begin{proof}
Let 
\begin{align*}
\mc{F} &=\{\chi(G) \in [s_{n}, t_{n}]\} \cap \mc{E} \\
\mc{F}' &=\{\chi(G') \in [s_{n'}, t_{n'}]\} \cap \mc{E}'.\end{align*}
Then (since $\Pb (\mc A \,|\, \mc B \cap \mc C) \ge \Pb (\mc A \cap \mc B \,|\, \mc C)$ for any events $\mc A$, $\mc B$, $\mc C$ and probability distribution $\Pb$), it suffices to show that there is a value $A$ such that 
\begin{align}
 \Pb \left(\mc{F} \,\,\big| \,\,X_a = A \right) &> \frac 34 \,\,\text{ and }  \label{eq:a1condition}\\
\Pb \left( \mc{F}' \,\,\big| \,\,X_a'=A+r\right) &> \frac 34 . \label{eq:a2condition}
\end{align}
As $\mu=n^x$ with $x< \frac 12 - \epsilon$ and $\mu'=\mu+o(1)$, an easy first moment calculation (for the number of pairs of independent $a$-sets which share between $1$ and $a-1$ vertices --- this is similar to the proof of Theorem 4.5.1 in \cite{alonspencer}) shows that the events $\mc{E}$ and $\mc{E}'$ both hold whp, and so the events $\mc{F}$ and $\mc{F}'$ also hold whp. 

Let $\mc{A}$ be the set of values $A$ for which (\ref{eq:a1condition}) holds, and let $\mc{A}'$ be the set of values $A$ for which (\ref{eq:a2condition}) holds. Then as
\begin{align*}
o(1) =  \Pb \left(\mc{F}^c \right) &\ge \sum_{A \notin \mc A} \Big( \Pb  \left(\mc{F}^c \,\,\big| \,\,X_a = A \right) \Pb(X_a = A) \Big) \ge \frac 14 \,\,\Pb (X_a \notin \mc{A}) \,\,\text{ and }\\
o(1) =  \Pb \left((\mc{F}')^c \right) &\ge \sum_{A \notin \mc A'} \Big( \Pb  \left((\mc{F}')^c \,\,\big| \,\,X'_a = A+r \right) \Pb(X'_a = A+r) \Big)  \ge \frac 14 \,\,\Pb (X_a' \notin \mc{A}'+r),
\end{align*}
whp $X_a \in  \mc{A}$ and $X_a' \in \mc{A}' +r$.

Therefore, by Lemma \ref{eq:poissonkonkret}, $\Poi_{\mu}(\mc{A})=1-o(1)$ and $\Poi_{\mu}(\mc{A}'+r)=1-o(1)$. From Lemma \ref{lemma:technicalPoisson} (applied to $\mc B_n = (\mc A '+r)^c$, noting that $\mc B_n - r = \mc A'^c $), it follows that also $\Poi_{\mu}(\mc{A}')=1-o(1)$, and so
\[
 \Poi_{\mu}\left(\mc{A} \cap \mc{A}'\right)=1-o(1).\]
Since $\mu> n^\epsilon \rightarrow \infty$,  by Chebyshev's inequality $\Poi_{\mu} \left( [\frac{1}{2}\mu, 2\mu] \cap \N_0\right)= 1-o(1)$. In particular, $\mc{A} \cap \mc{A}' \cap [\frac{1}{2}\mu, 2\mu]$ is non-empty, so there is at least one $A \in [\frac 12 \mu, 2\mu]$ which fulfils  (\ref{eq:a1condition}) and (\ref{eq:a2condition}).
\end{proof}

\subsection{Coupling the conditional distributions} \label{section:coupling}
Given an event $\mc P$, denote by $G_{n,p}|_{\mc P}$ the distribution of the random graph $G_{n,p}$ conditional on $\mc{P}$. The key ingredient of the proof is a construction of a coupling of essentially the two conditional distributions 
\[G_{n,\frac 12}|_{\{X_a=A\} \cap \mc{E}} \,\,\text{ and }\,\, G_{n', \frac 12}|_{\{X_a'=A+r\} \cap \mc{E}'}\]
from Lemma \ref{lemma:valueA}, so that the conditional $G_{n, \frac 12}$ is an induced subgraph of the conditional $G_{n', \frac 12}$ and their difference can be partitioned into $r$ independent $a$-sets.

For this, let $V'=[n']$, fix some arbitrary disjoint $a$-sets 
\[S_1, \dots, S_{A+r} \subset V',\]
as shown in Figure \ref{figure1}, and let 
\[V=V'\setminus \bigcup_{i=1}^r S_i.\]
Include every edge between vertices in $V'$ independently with probability $\frac 12$, and consider the following events.
\begin{align*}
 \mathcal D_1 :& \,\text{ The $a$-sets $S_{1}, \dots, S_{r}$ are independent.}\\
  \mathcal D _2 :& \,\text{ The $a$-sets $S_{r+1}, \dots, S_{r+A}$ are independent.}\\
   \mathcal U_1 :& \,\text{ There are no independent $a$-sets with at least one vertex in $V' \setminus V$, other than the $a$-sets}\\
  & \,\text{ $S_{1}, \dots, S_{r}$ (which may or may not be independent).}\\
  \mathcal U_2 :& \,\text{ There are no independent $a$-sets completely contained in $V$, other than the $a$-sets}\\
  & \,\text{ $S_{r+1}, \dots, S_{r+A}$ (which may or may not be independent).} 
\end{align*}
Note that $\mc U_1$ and $\mc U_2$ are up-sets, and $\mc D_1$ and $\mc D_2$ are principal down-sets (that is, $\mc D_1$ and $\mc D_2$ are events defined by forbidding a specific fixed edge set). Now condition on the event 
\[
 \mc D_1 \cap  \mc D_2 \cap  \mc U_1 \cap  \mc U_2
\]
that exactly the $A+r$ disjoint $a$-sets $S_1, \dots, S_{A+r}$ are independent and no others. Call the resulting random graph $H'$, and let $H=H'[V]$ be the induced subgraph of $H'$ on the vertex set $V$. By construction, $H$ is a graph on $n$ vertices with exactly $A$ disjoint independent $a$-sets, $H'$ is a graph on $n'$ vertices with exactly $A+r$ disjoint independent $a$-sets, and $V' \setminus V$ can be partitioned into $r$ disjoint independent $a$-sets.

\begin{figure}[tb]
\begin{center}
\begin{overpic}[width=0.8\textwidth]{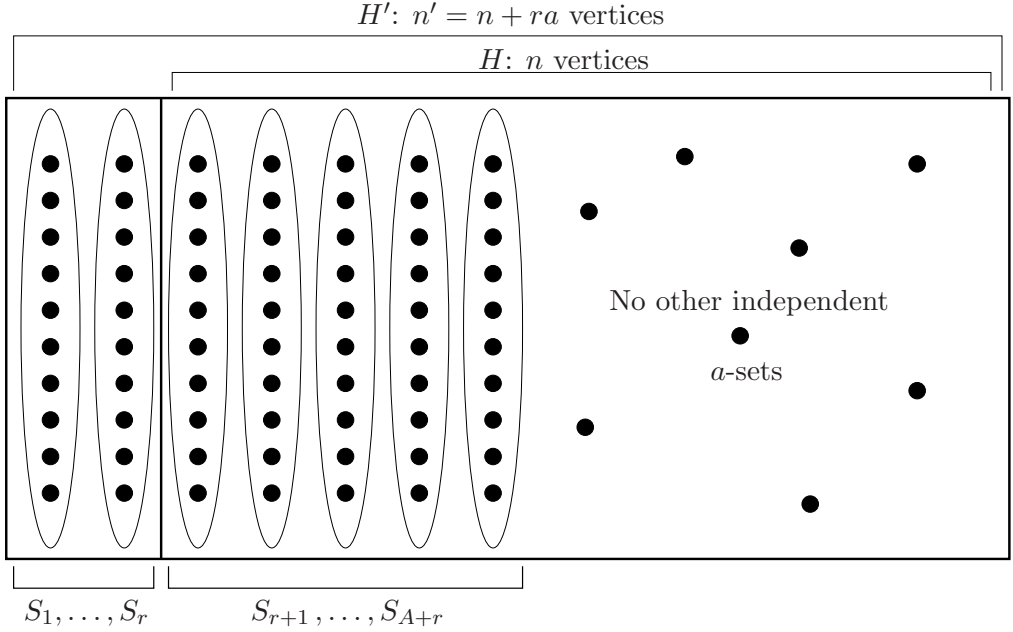}
\put(24.5,-3){$S_{r+1}\,, \dots, S_{A+r}$}
\put(2,-3){$S_1, \dots,  S_r$}
\put(35,56.5){{$H'$}: $n'=n+ra$ vertices}
\put(47,52){{$H$}: $n$ vertices}
\put(60,28){No other independent}
\put(70,21){$a$-sets}
\end{overpic}
\end{center}
\caption{Construction of the graphs $H$ and $H'$. We condition on the events that the fixed disjoint $a$-sets $S_1, \dots, S_{A+r}$ are independent, and that there are no other independent sets of size $a$.}
\label{figure1}
\end{figure}

It is not hard to see that, up to a random vertex permutation, $H'$ has exactly the required distribution.
\begin{claim} \label{claim1} Let $\hat H'$ be the random graph obtained from $H'$ by a uniform random permutation of the vertex labels in $V'$. Then
 $\hat H' \sim G_{n', \frac 12}|_{\{X_a'=A+r\} \cap \mc {E}'}.$ 
\end{claim}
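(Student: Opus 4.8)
\textbf{Proof proposal for Claim \ref{claim1}.}
The plan is to verify the claim by a direct symmetry-and-conditioning argument: show first that $H'$ (before relabelling) has the law of $G_{n',\frac12}$ conditioned on the event ``exactly the $a$-sets $S_1,\dots,S_{A+r}$ are independent'', and then show that averaging over a uniform random relabelling turns this into $G_{n',\frac12}$ conditioned on ``exactly $A+r$ disjoint $a$-sets are independent'', which is precisely $\{X_a'=A+r\}\cap\mc E'$. For the first part, note that the edge set between vertices of $V'$ is an i.i.d.\ family of $\mathrm{Bernoulli}(\frac12)$ variables, so the unconditioned graph is exactly $G_{n',\frac12}$; and the conditioning event $\mc D_1\cap\mc D_2\cap\mc U_1\cap\mc U_2$ was designed to be exactly the event $\{\text{the independent }a\text{-sets of }G_{n',\frac12}\text{ are precisely }S_1,\dots,S_{A+r}\}$. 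Indeed $\mc D_1\cap\mc D_2$ forces $S_1,\dots,S_{A+r}$ to be independent, while $\mc U_1\cap\mc U_2$ forbids any other independent $a$-set (every $a$-set other than $S_1,\dots,S_{A+r}$ either meets $V'\setminus V$, and is excluded by $\mc U_1$, or lies inside $V$, and is excluded by $\mc U_2$); conversely if the independent $a$-sets are exactly $S_1,\dots,S_{A+r}$ then all four events hold. So $H'\sim G_{n',\frac12}\mid_{\mc I(S_1,\dots,S_{A+r})}$, where $\mc I(\mc S)$ denotes the event ``the family of independent $a$-sets equals $\mc S$''.

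For the second part, fix any particular graph $g$ on vertex set $V'=[n']$ with exactly $A+r$ disjoint independent $a$-sets, say $T_1,\dots,T_{A+r}$; I must show $\Pb(\hat H'=g)$ equals $\Pb\big(G_{n',\frac12}=g \mid \{X_a'=A+r\}\cap\mc E'\big)$. Writing $\sigma$ for the uniform random permutation of $V'$,
\[
\Pb(\hat H'=g)=\sum_{\sigma}\frac{1}{n'!}\,\Pb\big(\sigma(H')=g\big)=\sum_{\sigma}\frac{1}{n'!}\,\Pb\big(H'=\sigma^{-1}(g)\big),
\]
and $\Pb(H'=\sigma^{-1}(g))$ is nonzero only when the independent $a$-sets of $\sigma^{-1}(g)$ are exactly $S_1,\dots,S_{A+r}$, i.e.\ when $\sigma$ maps $\{S_1,\dots,S_{A+r}\}$ (as an unordered family of sets) onto $\{T_1,\dots,T_{A+r}\}$. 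Since all such $\sigma^{-1}(g)$ are isomorphic to $g$ and the conditional law $G_{n',\frac12}\mid_{\mc I(S_1,\dots,S_{A+r})}$ is uniform over the set of labelled graphs with this independent-$a$-set structure (the conditioning just fixes a subset of the edge coordinates to $0$ and leaves the remaining ones i.i.d.), $\Pb(H'=\sigma^{-1}(g))$ takes one common value, call it $q$, for every such $\sigma$. Counting the number of permutations $\sigma$ with this property — there are $(A+r)!\,(a!)^{A+r}\,(n'-(A+r)a)!$ of them, a quantity independent of the choice of $g$ — gives $\Pb(\hat H'=g)=\frac{(A+r)!\,(a!)^{A+r}\,(n'-(A+r)a)!}{n'!}\,q$, which is a constant not depending on $g$. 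Hence $\hat H'$ is uniform over all labelled graphs on $[n']$ whose independent $a$-sets number exactly $A+r$ and are pairwise disjoint; since $G_{n',\frac12}$ is uniform over all labelled graphs and $\{X_a'=A+r\}\cap\mc E'$ is exactly that set of graphs, the conditional distribution of $G_{n',\frac12}$ on this event is the same uniform law, proving the claim.

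The main obstacle is bookkeeping rather than conceptual: one must be careful that the event $\{X_a'=A+r\}\cap\mc E'$ really does coincide with ``the independent $a$-sets are exactly $A+r$ pairwise disjoint sets'' — here $\mc E'$ contributes disjointness and $X_a'=A+r$ the count — and that conditioning $G_{n',\frac12}$ on $\mc I(S_1,\dots,S_{A+r})$ genuinely yields a uniform distribution, which hinges on the observation that this event only constrains the $\binom{a}{2}(A+r)$ ``intra-$S_i$'' edge variables (forced to be absent) together with the up-set constraints $\mc U_1,\mc U_2$ that forbid further independent $a$-sets; the latter are not of the simple ``fix a coordinate'' type, so one should instead argue uniformity directly from the fact that $G_{n',\frac12}$ is uniform on all labelled graphs and that conditioning any uniform distribution on any event yields the uniform distribution on that event. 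With that remark in hand the permutation-counting step is routine, and it is essentially the standard fact that relabelling a conditioned-on-a-canonical-configuration random graph recovers the graph conditioned on the unlabelled version of that configuration.
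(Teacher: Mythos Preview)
Your proof is correct and follows essentially the same symmetry argument as the paper's own proof: both exploit that $G_{n',\frac12}$ is the uniform distribution on labelled graphs, so conditioning on any event yields the uniform law on that event, and a uniform vertex permutation then symmetrizes the fixed collection $\{S_1,\dots,S_{A+r}\}$ into a uniformly random collection of $A+r$ disjoint $a$-sets. The paper phrases this as a mixture decomposition (the event $\{X_a'=A+r\}\cap\mc E'$ partitions into events indexed by fixed collections, each equally likely by symmetry, and $\pi(\mc S)$ is uniform among collections), whereas you carry out the explicit pointwise computation of $\Pb(\hat H'=g)$ and count permutations; the content is the same.

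One stylistic remark: in your second paragraph you first assert that conditioning on $\mc I(S_1,\dots,S_{A+r})$ ``just fixes a subset of the edge coordinates to $0$ and leaves the remaining ones i.i.d.'', which is false (the constraints $\mc U_1,\mc U_2$ are not of this form), and then you correct yourself in the third paragraph with the right reason, namely that uniformity survives conditioning. It would be cleaner to give the correct justification the first time and drop the misleading parenthetical.
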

\begin{proof}
Consider the random graph $G_{n', \frac 12}$ conditioned on $\{X_a'=A+r\}\cap \mc{E}'$. The set of all possible graphs on $n'$ vertices with exactly $A+r$ disjoint independent $a$-sets is the disjoint union of all such graphs where exactly $A+r$ \emph{fixed} disjoint independent $a$-sets are specified. In the conditional $G_{n', \frac 12}$, every such fixed collection of $A+r$ disjoint independent $a$-sets is equally likely (by symmetry).

Starting with the fixed collection $\mc S= \{S_1, \dots, S_{A+r}\}$ of $a$-sets, if $\pi$ is a uniform random permutation of $V'$, then the image $\pi(\mc S)$ is uniformly distributed amongst all collections of $A+r$ disjoint $a$-sets. Therefore, if we start by conditioning $G_{n', \frac 12}$ on having exactly the independent $a$-sets in $\mc{S}$ --- which is the distribution of $H'$ --- and then apply the random vertex permutation $\pi$, we recover the distribution $G_{n', \frac 12}|_{\{X_a'=A+r\} \cap \mc {E}'}$.
\end{proof}
Unfortunately, we cannot argue in the same way for $H$. If we obtain $\hat H$ from $H$ by randomly permuting the vertex labels in $V$, then $\hat H$ does \emph{not} have exactly the conditional distribution $G_{n, \frac 12} |_{\{X_a=A\} \cap \mc E}$. This is because the distribution of $H$ is also conditional on the event $\mc{U}_1$ that there are no other independent $a$-sets with at least one vertex in $V' \setminus V$.

However, as the expected number of such independent $a$-sets is small, the distributions are similar and we can bound probabilities in $H$ by the corresponding probabilities in $G_{n, \frac 12}|_{\{X_a=A\}}$. This can be deduced in several ways; the elegant formulation in Claim \ref{claim2} below was given by Oliver Riordan. 
 \begin{claim}\label{claim2}
 Let $\mc B$ be an event for the set of graphs with vertex set $V$ which is invariant under the permutation of vertex labels. Then 
 \[
  \Pb(H \in \mc B) \le (1+o(1)) \Pb \left( G_{n, \frac 12}|_{\{X_a=A\} \cap \mc {E}}\in \mc B \right) .
 \]
\end{claim}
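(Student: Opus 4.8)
The plan is to prove the slightly stronger statement that $\Pb(H\in\mc B)\le(1+o(1))\,\Pb\bigl(G_{n,\frac 12}|_{\{X_a=A\}\cap\mc E}\in\mc B\bigr)$ holds for \emph{every} event $\mc B$ on graphs with vertex set $V$, and to use the permutation-invariance of $\mc B$ only in order to rewrite the right-hand side. For that rewriting I would argue by symmetry. Identifying the vertex set of $G_{n,\frac 12}$ with $V$, the event $\{X_a=A\}\cap\mc E$ is the disjoint union, over all collections $\mathcal T$ of $A$ pairwise disjoint $a$-sets in $V$, of the events ``the independent $a$-sets are exactly $\mathcal T$'', i.e.\ of the analogues $\mc D_2^{\mathcal T}\cap\mc U_2^{\mathcal T}$ of $\mc D_2\cap\mc U_2$ (the latter being the case $\mathcal T_0=\{S_{r+1},\dots,S_{r+A}\}$). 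Hence $G_{n,\frac 12}|_{\{X_a=A\}\cap\mc E}$ is a mixture of the laws $G_{n,\frac 12}|_{\mc D_2^{\mathcal T}\cap\mc U_2^{\mathcal T}}$, which is uniform by vertex symmetry, and since $\mc B$ is permutation-invariant each of these laws assigns $\mc B$ the same probability; therefore $\Pb(G_{n,\frac 12}|_{\{X_a=A\}\cap\mc E}\in\mc B)=\Pb(G[V]|_{\mc D_2\cap\mc U_2}\in\mc B)$, where $G\sim G_{n',\frac 12}$ and $G[V]$ is its induced subgraph on $V$ (which has law $G_{n,\frac 12}$). It then suffices to show $\Pb(H\in\mc B)\le(1+o(1))\,\Pb(G[V]|_{\mc D_2\cap\mc U_2}\in\mc B)$.

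Next I would compare these two laws pointwise. Both are supported on graphs $g$ on $V$ with $g\in\mc D_2\cap\mc U_2$, so it is enough to bound the density $\Pb(H=g)\big/\Pb(G[V]|_{\mc D_2\cap\mc U_2}=g)$ uniformly over such $g$ and then sum over $g\in\mc B$. Working in the probability space of $G=G_{n',\frac 12}$ and using that $\mc D_2,\mc U_2$ depend only on the edges inside $V$ while $\mc D_1$ depends only on the edges inside $V'\setminus V$, for $g\in\mc D_2\cap\mc U_2$ one has $\Pb(H=g)=\Pb(G[V]=g)\,\Pb(\mc D_1\cap\mc U_1\mid G[V]=g)\big/\Pb(\mc D_1\cap\mc D_2\cap\mc U_1\cap\mc U_2)$ and $\Pb(G[V]|_{\mc D_2\cap\mc U_2}=g)=\Pb(G[V]=g)\big/\Pb(\mc D_2\cap\mc U_2)$. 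In the numerator I bound $\mc D_1\cap\mc U_1\subseteq\mc D_1$ together with independence of $\mc D_1$ from $G[V]$ to get $\Pb(\mc D_1\cap\mc U_1\mid G[V]=g)\le\Pb(\mc D_1)=2^{-r\binom a2}$; for the denominator, independence of $\mc D_1$ from $\mc D_2\cap\mc U_2$ gives $\Pb(\mc D_1\cap\mc D_2\cap\mc U_1\cap\mc U_2)\ge\Pb(\mc D_1\cap\mc D_2\cap\mc U_2)-\Pb(\mc D_1\cap\mc D_2\cap\mc U_2\cap\mc U_1^c)=2^{-r\binom a2}\Pb(\mc D_2\cap\mc U_2)(1-\delta)$, where $\delta:=\Pb(\mc U_1^c\mid\mc D_1\cap\mc D_2\cap\mc U_2)$. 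Dividing, the density is at most $(1-\delta)^{-1}$ uniformly in $g$, so everything reduces to proving $\delta=o(1)$.

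Finally I would bound $\delta$. Call an $a$-set $T\subseteq V'$ \emph{bad} if it meets $V'\setminus V$ and is not one of $S_1,\dots,S_r$; then $\mc U_1^c$ is the event that some bad $T$ is independent, so $\delta\le\sum_{T\text{ bad}}\Pb(T\text{ independent}\mid\mc D_1\cap\mc D_2\cap\mc U_2)$ by a union bound. The one place care is needed is that conditioning on $\mc U_2$ must not be allowed to inflate these probabilities by a factor like $e^{\Theta(\mu)}$ (the cost of a $\{X_a=0\}$-type event), which would be fatal; this is handled by Harris's inequality, since $\mc U_2$ is an increasing event and $\{T\text{ independent}\}$ a decreasing event in the edge indicators, and both remain monotone under the product measure obtained by conditioning on the principal down-sets $\mc D_1,\mc D_2$. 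Thus $\Pb(T\text{ independent}\mid\mc D_1\cap\mc D_2\cap\mc U_2)\le\Pb(T\text{ independent}\mid\mc D_1\cap\mc D_2)=2^{-e(T)}$, where $e(T)=\binom a2-\sum_{j=1}^{A+r}\binom{|T\cap S_j|}{2}$ is the number of pairs of $T$ not forced to be non-edges, and it remains to verify the deterministic bound $\sum_{T\text{ bad}}2^{-e(T)}=o(1)$. This is a routine first-moment computation of the familiar ``overlapping cliques'' type: organising bad $T$ by $\ell=|T\cap(V'\setminus V)|\ge1$ and by their intersection pattern with the $S_j$, and using $(A+r)a=o(n)$ (valid since $A\le2\mu\le2n^{1/2}$ and $r\le n^{1/4}$), the dominant term comes from $\ell=1$ and is of order $ra^2\mu/n=a^2n^{3x/2-1+o(1)}$, which is $o(1)$ because $x<\frac 12$; all other patterns contribute less. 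I expect this first-moment estimate — together with setting up the Harris step so that the $e^{\Theta(\mu)}$ factor genuinely cancels — to be the bulk of the work, Steps~1 and~2 being essentially conditioning bookkeeping.
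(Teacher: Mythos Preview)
Your proposal is correct and follows essentially the same route as the paper: identify $\Pb(G_{n,\frac12}|_{\{X_a=A\}\cap\mc E}\in\mc B)$ with $\Pb(\mc B\mid\mc D_2\cap\mc U_2)$ by symmetry, reduce the comparison to showing $\Pb(\mc U_1^c\mid\mc D_1\cap\mc D_2\cap\mc U_2)=o(1)$, remove the $\mc U_2$ conditioning via Harris in the product space obtained after conditioning on the principal down-sets $\mc D_1\cap\mc D_2$, and then bound $\E[Y\mid\mc D_1\cap\mc D_2]$ by the overlapping-sets first-moment computation (which the paper carries out in detail as its Lemma~\ref{lemma:Y}, obtaining $O^*(n^{3x/2-1})$). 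The only cosmetic differences are that the paper works at the event level (bounding $\Pb(\mc B\mid\mc D_1\cap\mc D_2\cap\mc U_1\cap\mc U_2)$ by $\Pb(\mc B\mid\mc D_1\cap\mc D_2\cap\mc U_2)/\Pb(\mc U_1\mid\mc D_1\cap\mc D_2\cap\mc U_2)$) rather than via a pointwise density ratio, and applies Harris once to the pair of up-sets $\mc U_1,\mc U_2$ rather than term-by-term to each $\{T\text{ independent}\}$ against $\mc U_2$; both arrive at the same bound.
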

\begin{proof}
By the same argument as in Claim \ref{claim1}, if we condition only on $\mc{D}_2 \cap \mc{U}_2$ and randomly permute the vertex labels of $V$, then the resulting random graph on $V$ has exactly the distribution $G_{n, \frac 12}|_{\{X_a=A\} \cap \mc {E}}$. Therefore, as $\mc B$ is invariant under the permutation of vertex labels,
\[
 \Pb \left( G_{n, \frac 12}|_{\{X_a=A\} \cap \mc {E}}\in \mc B \right) = \Pb(\mc{B} \mid\mc D_2 \cap  \mc U_2).
\]
The event $\mc{D}_1$ is independent from $\mc{B}$, $\mc D_2$ and $\mc U_2$ (as they depend on disjoint sets of edges), and so
\[
 \Pb \left( G_{n, \frac 12}|_{\{X_a=A\} \cap \mc {E}}\in \mc B \right) = \Pb(\mc{B} \mid \mc{D}_1 \cap \mc D_2 \cap  \mc U_2).
\]
Now note that
\begin{align*}
 \Pb( H \in \mc B) &= \Pb(\mc{B} \mid \mc D_1 \cap \mc D_2 \cap \mc U_1 \cap \mc U_2) = \frac{ \Pb(\mc{B} \cap \mc U_1 \mid \mc D_1 \cap \mc D_2 \cap \mc U_2)}{ \Pb(\mc U_1 \mid \mc D_1 \cap \mc D_2 \cap \mc U_2)} \le \frac{ \Pb(\mc{B} \mid \mc D_1 \cap \mc D_2 \cap \mc U_2)}{ \Pb(\mc U_1 \mid \mc D_1 \cap \mc D_2 \cap \mc U_2)}. \end{align*}
So to prove the claim, it suffices to show that $\Pb(\mc U_1 \mid \mc D_1 \cap \mc D_2 \cap \mc U_2) = 1-o(1)$. Note that $\mc D_1$ and $\mc D_2$ are principal down-sets, so after conditioning on $ \mc D_1 \cap \mc D_2 $, we still have a product probability space (for all the remaining edges which are not involved in  $\mc D_1 \cap \mc D_2$). As $\mc U_1$ and $\mc U_2$ are up-sets, by applying Harris' lemma to the aforementioned product space,
\[
 \Pb(\mc U_1 \mid \mc D_1 \cap \mc D_2 \cap \mc U_2) \ge \Pb(\mc U_1 \mid \mc D_1 \cap \mc D_2).
\]
So it suffices to show that $\Pb(\mc U_1^c \mid \mc D_1 \cap \mc D_2)=o(1)$. Note that $\mc U_1^c$ is the event that there is at least one independent $a$-set, other than  $S_1, \dots, S_r$, with at least one vertex in $V' \setminus V$. Let $Y$ denote the number of such sets. In Lemma \ref{lemma:Y} below, we will show by a straightforward but slightly involved calculation that
\begin{equation}\E[Y \mid \mc D_1 \cap \mc D_2]=o(1).\label{eq:Y}\end{equation}
This implies $\Pb(\mc U_1^c \mid \mc D_1 \cap \mc D_2)=o(1)$ as required.
\end{proof}
For the proof of Claim \ref{claim2} it remains to verify (\ref{eq:Y}).
\begin{lemma}\label{lemma:Y}
 Let $Y$ be as in the proof of Claim \ref{claim2}, then  $\E[Y \mid \mc D_1 \cap \mc D_2]=o(1)$.
\end{lemma}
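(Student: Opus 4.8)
The plan is to bound $\E[Y \mid \mc D_1 \cap \mc D_2]$ by a direct first-moment calculation, summing over all potential ``extra'' independent $a$-sets $T$ — that is, $a$-subsets of $V'$ with $T \neq S_i$ for $i \le r$ and $T \cap (V' \setminus V) \neq \emptyset$. For each such $T$, I need $\Pb(T \text{ is independent} \mid \mc D_1 \cap \mc D_2)$. Since $\mc D_1 \cap \mc D_2$ is a principal down-set (it forbids exactly the edges inside $S_1, \dots, S_{A+r}$), conditioning on it leaves a product space, and $\Pb(T \text{ independent} \mid \mc D_1 \cap \mc D_2) = (1/2)^{\binom a 2 - e(T)}$, where $e(T)$ is the number of pairs inside $T$ that lie within a common $S_i$ (those pairs are forced to be non-edges for free). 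The first step is therefore to set up this sum and organize the terms of $Y$ by the ``overlap pattern'' of $T$ with the fixed $a$-sets.

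The key observation to exploit is that the $S_i$ are pairwise disjoint, so a given $T$ can overlap several of them, but within each $S_i$ it meets in some set of size $j_i \ge 0$, contributing $\binom{j_i}{2}$ forced non-edges. The dominant contribution will come from $T$ that meet the ``new'' vertex set $V' \setminus V = \bigcup_{i \le r} S_i$ in as few vertices as possible (exactly one, to satisfy the constraint) and that do not cluster many vertices inside any single $S_i$ (so that the savings $e(T)$ is small). I would split the count of candidate sets $T$ according to $k = |T \cap (V' \setminus V)| \ge 1$ and according to how these $k$ vertices distribute among the $S_i$'s and how the remaining $a - k$ vertices sit inside $V$ (some possibly inside the $S_{r+1}, \dots, S_{r+A}$). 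Bounding the number of such $T$ by a crude product of binomials and bounding $(1/2)^{\binom a 2 - e(T)}$ using that $e(T)$ is at most a constant (since $a = O(\log n)$ and the relevant overlaps are bounded), each term is at most $\mu/n^{\Theta(1)}$ times a polynomial-in-$n$ count of sets, and the whole sum telescopes to $o(1)$ because $\mu \le n^{1/2 - \epsilon}$ and because forcing $T$ to use a vertex of $V' \setminus V$ (which has size $ra = O(n^{x/2} \log n)$, much smaller than $n$) costs a genuine factor of roughly $ra/n$ relative to the unconstrained count $\mu$.

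More concretely, I expect the calculation to show $\E[Y \mid \mc D_1 \cap \mc D_2] = O\!\left( \tfrac{ra}{n} \cdot \mu \cdot \mathrm{poly}(\log n) \right) + (\text{lower-order terms from larger overlaps})$, and since $ra = O(n^{x/2} \log n)$, $\mu = n^x$ and $x < \tfrac12 - \epsilon$, this is $O(n^{3x/2 - 1} \mathrm{poly}(\log n)) = o(1)$. The terms where $T$ shares two or more vertices with some $S_i$, or meets $\bigcup_{i\le r} S_i$ in $k \ge 2$ vertices, are even smaller: each extra shared vertex inside a single $S_i$ saves at most a bounded factor in the probability but costs a factor $\Theta(n)$ in the count (one fewer free vertex to place), and each extra vertex in $V' \setminus V$ costs a factor $\Theta(n / (ra))$; so after the first constrained vertex, all further deviations are strictly helpful and the geometric-type sum over overlap patterns converges. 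I would present this as: fix the overlap multiset, bound the number of $T$ with that pattern, bound the conditional probability, multiply, and sum — checking that the number of distinct patterns is at most polynomial in $a = O(\log n)$ so it does not affect the $o(1)$ conclusion.

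The main obstacle is purely bookkeeping: correctly accounting for the ``free'' non-edges $e(T)$ coming from overlaps with \emph{all} of $S_1, \dots, S_{A+r}$ (not just the first $r$), and making sure that the savings from a large overlap with some $S_i$ — which could in principle be as large as $\binom{a}{2}$ if $T$ nearly coincided with a single $S_i$ — never outpaces the combinatorial cost of choosing such a $T$. Since $T \neq S_i$ and $T$ must reach into $V' \setminus V$, the genuinely dangerous case is $T$ overlapping one of $S_{r+1}, \dots, S_{r+A}$ heavily while using just one vertex of $V' \setminus V$; there one must check that replacing up to $a - 1$ vertices of an $S_j$ ($j > r$) by new vertices still leaves a net factor of $o(1)$, which it does because each such replacement trades a factor $2^{-(a - O(1))}$ of probability gain (wait — actually a \emph{loss}, since fewer forced non-edges) against only a polynomial count gain. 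Carefully ruling out all heavy-overlap configurations, and confirming there are only $\mathrm{poly}(\log n)$ configuration types, is the crux; everything else is a routine estimate.
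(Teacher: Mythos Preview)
Your overall plan is the same as the paper's: decompose $T$ by its overlap pattern with the fixed $a$-sets $S_1,\dots,S_{A+r}$, use $\Pb(T\text{ independent}\mid \mc D_1\cap\mc D_2)=2^{-\binom a2+\sum_j\binom{t_j}{2}}$, count sets with a given pattern, and arrive at the leading contribution $O^*(r\mu\cdot a/n)=O^*(n^{3x/2-1})=o(1)$. That much is right.

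The gap is in your treatment of heavy overlaps. You first assert that ``$e(T)$ is at most a constant'' and later that ``each extra shared vertex inside a single $S_i$ saves at most a bounded factor in the probability''. Both claims are false: increasing the overlap with a fixed $S_i$ from $t$ to $t+1$ multiplies the conditional probability by $2^{t}$, which reaches order $2^{a}\approx n^2$ when $t$ is near $a$. So the term-by-term ratio is \emph{not} bounded, and the sum over patterns does \emph{not} converge geometrically in the way you describe; your final paragraph also shows confusion about the direction of the trade-off. The paper's device is to encode the per-overlap contribution as
\[
\sigma_t=\frac{(A+r)\binom{a}{t}a!\,2^{\binom t2}}{(a-t)!\,(n-a)^t},
\]
observe that $\sigma_{t+1}/\sigma_t=\frac{(a-t)^22^t}{(t+1)(n-a)}$ is first $O^*(n^{-1})$ and then increases to $\Theta^*(n)$, so $\sigma_t$ is unimodal on $[1,a-1]$ and hence $\sigma_t\le\max(\sigma_1,\sigma_{a-1})$. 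One then checks separately that $\sigma_1=\Theta^*(n^{x-1})$ and $\sigma_{a-1}=O^*(n^{-1})=o(\sigma_1)$, so $\sigma_t\le\sigma_1$ uniformly. This unimodality-plus-endpoint argument is the missing idea; without it, the heavy-overlap case (where the probability boost is genuinely large) is not under control, and your sketch as written would not close.
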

\begin{proof}
We use the following notation below. For two functions $f, g: \N \rightarrow \R$, we write $f= O^*(g)$ if there are constants $C$ and $n_0$ such that $|f(n)| \le (\log n)^C g(n)$ for all~$n\ge n_0$. We write $f = \Theta^*(g)$ if $f = O^*(g)$ and $g= O^*(f)$.

Consider a potential $a$-set $T$ counted in $Y$. Then $T$ can be written as the disjoint union 
\[
 T= \bigcup_{j=1}^M T_j \cup T_\text{rest}
\]
where $M \ge 1$, $T_j \subset S_{i_j}$ for some $i_j \in \{1, \dots, A+r\}$, and $T_\text{rest} \subset V$. We can assume $i_1 < \dots < i_M$ and $i_1 \in [r]$ (as $T$ has at least one vertex in $V' \setminus V=\bigcup_{i=1}^r S_i$) and that the sets $T_j$ are non-empty for all $1\le j \le M$. As $T$ cannot be identical to any of the sets $S_1, \dots, S_{A+r}$, letting $t_j=|T_j|$, we have for all $1\le j \le M$,
\[
 1 \le t_j \le a-1.
\]
Let $\mc{T}$ be the set of all pairs $(M, \textbf{t})$, where $1\le M \le a$ is an integer and $\textbf{t}=(t_1, \dots, t_M)$ is a sequence of integers with $1\le t_j\le a-1$ for all $j$, and $\sum {t_j} \le a$. Starting with some $(M, \textbf{t}) \in \mc T$, an $a$-set $T$ corresponding to $(M, \textbf{t}) $ is defined by choosing $i_{1} \in [r]$, $\{i_2, \dots, i_M \} \subset [A+r]$, $T_{j} \subset S_{t_j}$ for all $1 \le j \le M$ and $T_{\text{rest}}\subset V$. So the number of $a$-sets $T$ corresponding to $(M, \textbf{t}) $ is at most
\begin{equation}r{A+r \choose M-1} \left(\prod_{j=1}^M {a \choose t_j} \right){n \choose a- \sum_{j=1}^M t_j} \le \frac{r}{A+r} {n \choose a} \prod_{j=1}^M  \frac{(A+r){a \choose t_j}a!}{(a-t_j)!(n-a)^{t_j}},
\label{eq:countT}
\end{equation}
using ${A+r \choose M-1} \le (A+r)^{M-1}$ and bounding
\[
 \frac{{n \choose a-\sum_j t_j}}  {{n \choose a}} = \frac{a!(n-a)!}{(a-\sum_j t_j)!(n-a+\sum_j t_j)!} \le \prod_{j=1}^M \frac{a!}{(a-t_j)!(n-a)^{t_j}}.
\]
Condition on $\mc D_1 \cap \mc D_2$, that is, on the event that none of the ${(A+r){a \choose 2}}$ edges within the $a$-sets $S_1, \dots, S_{A+r}$ are present. Then conditional on $\mc D_1 \cap \mc D_2$, a given $a$-set $T$ is independent with probability exactly $2^{-{a \choose 2}+\sum_j{t_j \choose 2}}$. Therefore, with (\ref{eq:countT}),
\begin{align}
 \E[Y \mid \mc D_1 \cap \mc D_2 ] &\le  \frac{r}{A+r} {n \choose a}2^{-{a \choose 2}} \sum_{(M, \textbf{t})\in \mc T} \prod_{j=1}^M \frac{(A+r){a \choose t_j}a! 2^{{t_j \choose 2}}}{(a-t_j)!(n-a)^{t_j}} =  \frac{r \mu}{A+r}  \sum_{(M, \textbf{t})\in \mc T} \prod_{j=1}^M \sigma_{t_j}, \label{eq:cont}
\end{align}
where
\[
 \sigma_t =   \frac{(A+r){a \choose t}a! 2^{{t \choose 2}}}{(a-t)!(n-a)^{t}} .
\]
Note that, as ${n \choose a-1}2^{-{a-1 \choose 2}}=\Theta\left(\frac{n}{\log n} \right) {n \choose a}2^{-{a \choose 2}}=\Theta\left(\frac{n}{\log n} \mu \right)$, $\mu=n^x$ and by Lemma \ref{lemma:valueA}, $A=O(n^x)$,
\begin{align*}
 \sigma_1 &= \Theta \left(\frac{n^x a^2}{n} \right)= \Theta^*\left(n^{x-1} \right),\\
 \sigma_{a-1}&=O \left(n^x a \right) \frac{a!2^{a-1 \choose 2}}{(n-a)^{a-1}} = O^*\left( \frac{n^x }{{n \choose a-1}2^{-{a-1 \choose 2}}}\right)=O^* \left(\frac{n^{x-1}}{\mu} \right) =O^*\left(n^{-1}\right)=o\left(n^{x-1} \right).
\end{align*}
Considering the ratio  $\sigma_{t+1}/\sigma_t =\frac{(a-t)^22^t}{(t+1)(n-a)}$ (which is $O^*\left(n^{-1} \right)$ for $t=O(1)$, then increases and reaches $\Theta^* \left( n\right)$ for $t=a-O(1)$), it is not hard to see that for all $1 \le t \le a-1$,
\[
 \sigma_t \le \max \left(\sigma_1, \sigma_{a-1} \right) =\sigma_1=O^*(n^{x-1}).
\]
So from (\ref{eq:cont}), as $1 \le t_j \le a-1$, $\mu=n^x$, $r=O(n^{x/2})$, $A=\Theta(n^x)$ and $x<\frac 12$,
\[
 \E[Y \mid \mc D_1 \cap \mc D_2 ] \le  \frac{r \mu}{A+r} \sum_{(M, \textbf{t})\in\mc T} \sigma_1^M \le \frac{r \mu}{A+r} \sum_{M \ge 1} \left(a^M \sigma_1^M \right)=O^* \left(\frac{r \mu \sigma_1}{A+r} \right) = O^* \left(n^{\frac{3}{2}x-1}\right)=o(1).
\]\end{proof}

As  the chromatic number of a graph is invariant under the permutation of vertex labels, it follows from Claim \ref{claim1} and Lemma \ref{lemma:valueA} that
\[
\Pb\left( \chi(H') \in [s_{n'}, t_{n'}] \right)>\frac 34.
\]
From Claim \ref{claim2} (applied to the event $\mc{B}=\{\chi(H) \notin [a_{n}, b_{n}]\}$) and Lemma \ref{lemma:valueA}, it follows that
\[
 \Pb\left( \chi(H) \in [s_{n}, t_{n}] \right) > (1+o(1)) \cdot \frac 34>\frac 12
\]
if $n$ is large enough. But as $V' \setminus V$ is the union of the independent $a$-sets $S_1, \dots, S_r$, we also have
\begin{equation*}
\chi(H') \le \chi(H)+r.
\end{equation*}
So with probability at least $\frac 14$,
\begin{equation*}
 s_{n'} \le \chi(H') \le \chi(H)+r \le t_n+r .
\end{equation*}
The left-hand side and the right-hand side are simply functions of $n$, not random variables, so it follows that, deterministically,
\[
s_{n'} \le  t_n+r
\]
and therefore
\begin{equation*}
 l_n=t_n-s_n \ge s_{n'}-s_n-r. 
\end{equation*}

\subsection{Finishing the proof} \label{section:finishing}

Let us summarise our progress so far in the following lemma. Recall the definitions of $s_n$ and $l_n$ given at the beginning of Section \ref{section:preliminaries}, and of the functions $a(n)$, $\mu(n)$ and $x(n)$ in Section \ref{section:preliminaries}.
\begin{lemma}\label{lemma:keylemma}
For every fixed $\epsilon \in (0,\frac{1}{4})$, if $n$ is large enough (i.e.\ if $n\ge N_\epsilon$ for some $N_\epsilon>0$) and
\[                                                                                                                                           
n^\epsilon \le \mu(n) \le n^{\frac 12 - \epsilon} \,\,\text{ or equivalently }\,\, \epsilon \le x(n) \le \frac 12 - \epsilon,                                                                                                                                             \]
then, 
letting $r=r(n)=\left \lfloor n^{x(n)/2} \right \rfloor $ and $n'=n+a(n)r$,
\begin{equation*} 
  l_n  \ge s_{n'}-s_n-r.
\end{equation*} \qed
\end{lemma}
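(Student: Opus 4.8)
The plan is to chain together the results of Sections~\ref{section:typicalvalues} and~\ref{section:coupling}, of which this lemma is the consolidation. Fix $\epsilon\in(0,\frac14)$ and a large integer $n$ with $\epsilon\le x(n)\le\frac12-\epsilon$, and set $r=\lfloor n^{x/2}\rfloor$, $n'=n+ra$; as already observed, $\alpha_0'=\alpha_0+o(1)$ forces $a'=a$ once $n$ is large, because $\alpha_0-\lfloor\alpha_0\rfloor=x+o(1)$ is bounded away from $0$ and $1$. First I would apply Lemma~\ref{lemma:valueA} to fix an integer $A=A(n)\in[\frac12 n^x,2n^x]$ with
\[
\Pb\big(\chi(G)\in[s_n,t_n]\mid\{X_a=A\}\cap\mc E\big)>\tfrac34 \qquad\text{and}\qquad \Pb\big(\chi(G')\in[s_{n'},t_{n'}]\mid\{X_a'=A+r\}\cap\mc E'\big)>\tfrac34,
\]
where $G\sim G_{n,\frac12}$, $G'\sim G_{n',\frac12}$ and $\mc E,\mc E'$ are the events that all independent $a$-sets of $G$, $G'$ are disjoint.

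Next I would run the coupling of Section~\ref{section:coupling}: on $V'=[n']$ fix disjoint $a$-sets $S_1,\dots,S_{A+r}$, put $V=V'\setminus\bigcup_{i=1}^{r}S_i$, include every edge inside $V'$ independently with probability $\frac12$, condition on $\mc D_1\cap\mc D_2\cap\mc U_1\cap\mc U_2$, call the result $H'$ and set $H=H'[V]$. By construction $V'\setminus V$ splits into the $r$ disjoint independent $a$-sets $S_1,\dots,S_r$, so $\chi(H')\le\chi(H)+r$. By Claim~\ref{claim1}, a uniform relabelling of $H'$ has distribution $G_{n',\frac12}|_{\{X_a'=A+r\}\cap\mc E'}$, so the second bound above and the permutation-invariance of $\chi$ give $\Pb(\chi(H')\in[s_{n'},t_{n'}])>\frac34$; by Claim~\ref{claim2} applied to the permutation-invariant event $\mc B=\{\chi(H)\notin[s_n,t_n]\}$ and the first bound above, $\Pb(\chi(H)\notin[s_n,t_n])\le(1+o(1))\cdot\frac14<\frac12$ for $n$ large, i.e.\ $\Pb(\chi(H)\in[s_n,t_n])>\frac12$.

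A union bound then gives that with probability at least $\frac12+\frac34-1=\frac14>0$ both $\chi(H)\in[s_n,t_n]$ and $\chi(H')\in[s_{n'},t_{n'}]$ hold, and on that event
\[
s_{n'}\le\chi(H')\le\chi(H)+r\le t_n+r.
\]
Since $s_{n'},t_n,r$ are deterministic, $s_{n'}\le t_n+r$ holds unconditionally, which rearranges to $l_n=t_n-s_n\ge s_{n'}-s_n-r$, as claimed.

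I expect the real obstacle to be the coupling itself, and within it Claim~\ref{claim2}: the graph $H$ carries the parasitic conditioning $\mc U_1$ — that no independent $a$-set other than $S_1,\dots,S_r$ meets $V'\setminus V$ — which the target distribution $G_{n,\frac12}|_{\{X_a=A\}\cap\mc E}$ does not, and one must show this scarcely perturbs the probability of any permutation-invariant event. The route is Harris'/FKG: after conditioning on the principal down-sets $\mc D_1,\mc D_2$ one still has a product space, and since $\mc U_1,\mc U_2$ are up-sets this reduces the task to $\Pb(\mc U_1^c\mid\mc D_1\cap\mc D_2)=o(1)$, i.e.\ to the first-moment estimate $\E[Y\mid\mc D_1\cap\mc D_2]=o(1)$ of Lemma~\ref{lemma:Y}, which works out to $O^*(n^{3x/2-1})$. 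This bound, together with the companion first-moment calculation inside the proof of Lemma~\ref{lemma:valueA} that makes $\mc E,\mc E'$ hold whp, is where the hypothesis $x(n)<\frac12$ is actually used.
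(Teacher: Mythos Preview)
Your proposal is correct and follows exactly the paper's route: the lemma is stated as a summary (hence the \qed\ in its statement), and its proof is precisely the chain Lemma~\ref{lemma:valueA} $\to$ coupling construction $\to$ Claims~\ref{claim1} and~\ref{claim2} $\to$ union bound $\to$ deterministic inequality $s_{n'}\le t_n+r$, which you have reproduced faithfully. Your closing remarks correctly locate the real work in Claim~\ref{claim2} and Lemma~\ref{lemma:Y}, and correctly identify the hypothesis $x(n)<\tfrac12-\epsilon$ as what makes both the first-moment bound $\E[Y\mid\mc D_1\cap\mc D_2]=O^*(n^{3x/2-1})=o(1)$ and the whp-disjointness of independent $a$-sets go through.
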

Recall from  (\ref{eq:an}) that $s_n=f(n)+o\left( \frac{n}{\log^2 n}\right)$. With $n$ and $n'$ as in Lemma \ref{lemma:keylemma}, by a straightforward calculation which can be found in the appendix, if $n$ is large enough,
\begin{equation}
 f(n')-f(n) = r(n)+ \big(1-x(n)+o(1)\big)\frac{r(n)}{a(n)}  >  r(n)+ \frac{r(n)}{2a(n)}.\label{eq:difference}
\end{equation}
If we had equality in the estimate $s_n \approx f(n)$, without the additive error term $o\left( \frac{n}{\log^2 n}\right)$, then together with Lemma \ref{lemma:keylemma}, this would imply $l_n \ge \Theta \left( \frac{r(n)}{\log n} \right)$. To tackle the error term, we will apply Lemma~\ref{lemma:keylemma} to a \emph{sequence} of values $(n_i)_{i\ge 1}$.

To this end, let $c \in \left(0, \frac 14\right)$ be a constant, and let 
\begin{equation}\label{eq:defepsilon}
\epsilon=\frac{1}{4}\left(\frac 14-c\right) < \frac 1 {16}.\end{equation}
By Lemma \ref{lemma:choiceofn}, there is an arbitrarily large integer $n_1$ such that
\begin{equation}\label{eq:defn1}
2\epsilon < \frac12-4 \epsilon< x(n_1) < \frac 12 -3\epsilon. 
\end{equation}
Note that by (\ref{eq:xproperty}), $\alpha_0(n_1)=a(n_1)+x(n_1)+o(1)< a(n_1)+\frac 12-3 \epsilon+o(1)$. Let $M$ be the largest integer such that for all $n_1 \le n \le M$,
\begin{equation}
 \alpha_0(n) < a(n_1)+\frac 12 - 2\epsilon. \label{eq:al0}
\end{equation}
For $n_1$ large enough, it follows from the definition (\ref{eq:adef}) of $\alpha_0$ that
\begin{equation}M-n_1 =\Theta (n_1).\label{eq:Mn1}\end{equation}
Furthermore, if $n_1$ is large enough, then for all $n_1 \le n \le M$,
\begin{align}
\alpha_0(n_1) &\le \alpha_0(n) \le \alpha_0(M), \nonumber\\
 a(n)&= \left \lfloor \alpha_0(n)+o(1) \right \rfloor = a(n_1)\,\, \text{ by (\ref{eq:al0})}, \nonumber \\
 \epsilon < 2\epsilon+o(1)<x(n_1) \le x(n) &\le \frac 12 - 2\epsilon+o(1) < \frac 12 -\epsilon\,\, \text{ by (\ref{eq:xproperty}), (\ref{eq:defn1}) and (\ref{eq:al0}); and so}\nonumber\\
 n^\epsilon &< \mu(n) < n^{\frac 12 -\epsilon} \label{eq:properties}.
\end{align}
Let $a=a(n_1)$. We inductively define a sequence of integers: for $i \ge 1$, let $x_i=x(n_i)$, $r_i= \left \lfloor n^{x_i/2} \right \rfloor$ and
\begin{equation}\label{eq:ni+1}
 n_{i+1}= n_i + r_i a.
\end{equation}
Let $i_\mathrm{\max}$ be the largest index so that $n_{i_{\mathrm{max}}}\le M$. Note that if $n_1$ is large enough, by (\ref{eq:Mn1}),
\begin{equation}
 n_{i_\mathrm{max}}-n_1 = \Theta (n_1). \label{eq:diffbig}
\end{equation}
Now by the properties stated in (\ref{eq:properties}), if $n_1$ is large enough, we may apply Lemma \ref{lemma:keylemma} to every pair $(n_i, n_{i+1})$ where $1 \le i < i_\mathrm{\max}$. Let $s_i=s_{n_i}$, $t_i= t_{n_i}$ and $l_{i}=l_{n_i}$, then
\begin{equation*}
l_{i} \ge s_{i+1}-s_{i}-r_i \text{ for all }1\le i < i_\mathrm{\max}.
\end{equation*}
Therefore,
\begin{equation}\label{eq:first}
 \sum_{i=1}^{\imax-1}l_i \ge s_{\imax}-s_1 -\sum_{i=1}^{\imax-1} r_i.
\end{equation}
By (\ref{eq:difference}), if $n_1$ is large enough, for all $1 \le i < \imax$,
\begin{equation*}
 f(n_{i+1})-f(n_i) > r_i+  \frac{r_i}{2a},
\end{equation*}
and so by (\ref{eq:an}) and (\ref{eq:properties}), and as $n_{i_\mathrm{max}}=\Theta(n_1)$ by (\ref{eq:diffbig}),
\[
 s_\imax-s_1=f(n_{\imax})-f(n_1)+o\left(\frac{n_1}{\log^2 n_1} \right)> \sum_{i=1}^{\imax-1}\left(r_i+ \frac{r_i}{2a}  \right) +o\left(\frac{n_1}{\log^2 n_1} \right).
\]
Together with (\ref{eq:first}), this gives
\[
 \sum_{i=1}^{\imax-1} l_i >  \sum_{i=1}^{\imax-1} \frac{r_i}{2a} +o\left(\frac{n_1}{\log^2 n_1}\right).
\]
Note that by (\ref{eq:ni+1}) and (\ref{eq:diffbig}),
\[\sum_{i=1}^{\imax-1} \frac{r_i}{a} = \frac{n_\imax-n_1}{a^2}=\Theta \left(\frac{n_1}{a^2} \right)=\Theta \left(\frac{n_1}{\log^2 n} \right),\]
and so, if $n$ is large enough,
\begin{equation}
 \sum_{i=1}^{\imax-1} l_i >  \sum_{i=1}^{\imax-1} \frac{r_i}{2a} +o\left(\frac{n_1}{\log^2 n}\right)  \ge \sum_{i=1}^{\imax-1} \frac{r_i}{3a}.\label{eq:error}
\end{equation}
In particular, there is \emph{some} index $1\le i^* < \imax$ such that, letting $n^*=n_{i^*}$,
\[
 l_{n^*} > \frac{r_{i^*}}{3a}.
\]
(In fact, there are either many indices $i$ with $l_{n_i} > \frac{r_{i}}{10a}$, say, or some $i$ where $l_i$ is very long.)
By (\ref{eq:properties}), $r_{i^*}=\left \lfloor \left(n^*\right)^{x(n^*)/2} \right \rfloor \ge \left \lfloor \left(n^*\right)^{x(n_1)/2} \right \rfloor $. So by (\ref{eq:defepsilon}) and (\ref{eq:defn1}), if $n_1$ is large enough,
\[
 l_{n^*} > \left(n^* \right)^{\frac 14 - 4 \epsilon}=\left(n^*\right)^c,
\]
so we have found an integer $n^*$ with $l_{n^*}>\left(n^* \right)^c$ as required. \qed

\section{Remarks and open questions} \label{section:remarks}

\begin{itemize}
  \item As a corollary of Theorem \ref{theorem:nonconcentration}, the same conclusion holds for the random graph $G_{n,m}$ with $m= \left \lfloor n^2/4 \right \rfloor$, which was pointed out by Alex Scott. This is because we can couple the random graphs $G_{n,m}$ and $G_{n, \frac 12}$ so that whp their chromatic numbers differ by at most $\omega(n)\log n$ for any function $\omega(n) \rightarrow \infty$. 
  For this, start with $G_{n,m}$ and independently sample $E \sim \Bin \left({n \choose 2}, \frac 12 \right)$. Now either add $E-m$ edges to or remove $m-E$ from $G_{n,m}$ uniformly at random, so that the total number of edges is $E$. The new graph has the distribution $G_{n,\frac 12}$, and it is not hard to show that whp this changes the chromatic number by at most $\omega(n)\log n$. (Note that in both $G_{n,m}$ and $G_{n, \frac 12}$, an optimal colouring consists of $O \left( \frac n {\log n}\right)$ colour classes of size $O (\log n)$. If we add $|E-m|\le  n \sqrt{\omega(n)}$ random edges, then whp at most $\omega(n) \log n$ of these to ``spoil'' a given optimal colouring, which can be ``fixed'' by adding at most $\omega(n) \log n$ new colours.)  
\item Of course $X_a$ is not whp contained in any sequence of intervals of length less than $n^{\frac 12 - \epsilon}$ for any fixed $\epsilon>0$, because there are infinitely many values $n$ where $x(n) > 1-\epsilon$. We conjecture that the same is true for $\chi(G_{n, \frac 12})$. This exponent would match the upper bound for the concentration of $\chi(G_{n, p})$ given by Shamir and Spencer \cite{shamir1987sharp}.

In the proof of Theorem \ref{theorem:nonconcentration}, we only consider the case $x(n) <\frac 12 - \epsilon$ because then whp all independent $a$-sets in $G_{n, \frac 12}$ are disjoint. It is possible that the coupling argument could be refined to show that there is some interval $[s_n, t_n]$ of length at least $n^{\frac 12 - \epsilon}$. 
 \item While Theorem \ref{theorem:nonconcentration} was only proved for $p=\frac 12$, the same proof works for any constant $p \in (0, 1-1/e^2]$. For $p>1-1/e^2$, there are some additional technical difficulties because the estimate for $\chi(G_{n,p})$ given in \cite{heckel2018chromatic} differs from the one in Theorem \ref{theorem:bounds}, and we have not attempted this case.
 
It would be interesting to see whether the argument could be generalised to other ranges $p=p(n)$. As Alon and Krivelevich \cite{alon1997concentration} proved two point concentration for $p< n^{-\frac 12-\epsilon}$, this would be particularly interesting for $p$ close to $n^{-\frac 12}$.
 
\item It should be noted that the proof of Theorem \ref{theorem:nonconcentration} required a fairly good estimate for $\chi(G_{n, \frac 12})$. The error bound in Theorem \ref{theorem:bounds} is of size $o \left(\frac{n}{\log^2 n} \right)$, which is used in (\ref{eq:error}), and the proof would not have worked with an error bound of size $O\left(  \frac{n}{\log^2 n} \right)$. Therefore, to extend the result to other ranges of $p$, we might first need similarly accurate bounds for $\chi(G_{n,p})$.

 \item  Theorem \ref{theorem:nonconcentration} only implies that for any constant $c< \frac 14$, there are \emph{some} values $n$ where $l_n > n^{c}$. It could still be the case that $\chi(G_{n, \frac 12})$ is very narrowly concentrated on a subsequence of the integers. Can we find a lower bound for $l_n$ which holds for \emph{all} large enough~$n$?
 
 \item Ultimately, it would 
 be very nice to establish the correct exponent for the concentration of $\chi(G_{n, \frac 12})$, and it seems likely that this exponent varies with $n$. In other words, can we find a function $\rho(n)$ such that for any fixed $\epsilon>0$, $\chi(G_{n,p})$ is whp contained in some sequence of intervals of length $n^{\rho(n)+\epsilon}$, but for any sequence of intervals $I_n$ of length at most $n^{\rho(n)-\epsilon}$, if $n$ is large enough, 
  \[\Pb \left( \chi(G_{n, \frac 12}) \in I_n \right) < \frac 12?\]
\end{itemize}

\section*{Acknowledgements}
The work in this paper was completed during the Oberwolfach workshop ``Combinatorics, Probability and Computing'', and I am grateful to the MFO institute for their hospitality. I would like to thank Oliver Riordan for many discussions and suggestions which simplified 
the coupling argument considerably, as well as David Conlon, Eoin Long, Konstantinos Panagiotou and Lutz Warnke for several helpful discussions and remarks on earlier versions of this paper. I would also like to thank the anonymous referees for their detailed comments and suggestions which greatly improved the presentation of the paper.
 \bibliographystyle{plainnat}

\section*{Appendix}

\subsubsection*{Proof of Lemma \ref{lemma:technicalPoisson}}
To simplify notation, we write $\lambda$ instead of $\lambda_n$. Let $ r= \left \lfloor \sqrt{\lambda} \right \rfloor$ and fix $\epsilon>0$. Then it suffices to show that there is some $\delta=\delta(\epsilon)>0$ such that, if $n$ and therefore $\lambda$ is large enough, for all sets $\mc{B}$ with $\Poi_{\lambda}(\mc{B}) <\delta$ we have
\[
 \Poi_{\lambda}\left(\mc{B}- r \right) <\epsilon.
\]
So let $\delta>0$ be small enough so that 
\[
\log \left(\frac{\epsilon}{2\delta}\right)> \sqrt{\frac{3}{\epsilon}}+1,
\]
and pick some 
\begin{equation}
t \in \left( \sqrt{\frac{3}{\epsilon}}+1,  \log \left(\frac{\epsilon}{2\delta}\right)\right). \label{eq:deft}
\end{equation}
Suppose that $\mc{B}$ is a set of integers with $\Poi_{\lambda}(\mc{B}) <\delta$. For $s>0$, let 
\[
 I_s= \left[\lambda-sr, \lambda+s r \right]\cap \Z,
\]
and consider $\mc{B}_1=\mc{B} \setminus I_t$ and  $\mc{B}_2=\mc{B} \cap I_t$. Note that $ {I_{t-1}} \subset {I_t-r} $, so $\mc{B}_1 -r \subset I_t^c-r \subset I_{t-1}^c$. So by Chebyshev's inequality and (\ref{eq:deft}), if $\lambda$ is large enough,
\begin{equation}\label{eq:firstpart}
 \Poi_\lambda (\mc{B}_1-r) \le \Poi_\lambda (I_{t-1}^c) \le \frac{\lambda}{(t-1)^2r^2} < \frac{\epsilon}{2}.
\end{equation}
Now consider some $k \in \mc{B}_2$. If $k-r<0$, we have $\Poi_\lambda(\{k-r\})=0$, so in particular
\begin{equation}\label{eq:asjk}
 \Poi_\lambda (\{k-r\}) \le \frac{\epsilon}{2\delta} \Poi_\lambda(\{k\}).
\end{equation}
Otherwise, as $k \le \lambda+t r \le \lambda + t\sqrt{\lambda}$, by (\ref{eq:deft}), 
\[
\frac{\Poi_\lambda (\{k-r\})} {\Poi_\lambda (\{k\}) } = \frac{ k!}{ \lambda^r(k-r)!} \le \left( \frac{k}{\lambda}\right)^r \le \left( 1+\frac{t}{\sqrt{\lambda}} \right)^{\sqrt{\lambda}} \le e^{t}< \frac{\epsilon}{2\delta}.\]
 Therefore, together with (\ref{eq:asjk}),
 \[
  \Poi_\lambda (\mc{B}_2-r) \le \frac{\epsilon}{2\delta} \Poi_\lambda (\mc{B}_2)\le \frac{\epsilon}{2\delta} \Poi_\lambda (\mc{B}) < \frac{\epsilon}{2} .
 \]
 Together with (\ref{eq:firstpart}), this proves the claim.
\qed

\subsubsection*{Proof of (\ref{eq:difference})}
First note that 
\[f(n)=\frac{n}{2 \log_2 n - 2 \log_2 \log_2 n-2}=\frac{n}{\alpha_0(n)-1-\frac{2}{\log 2}}
.\]
Now
\begin{equation}\label{eq:alphan}
 \alpha_0(n')-\alpha_0(n) \sim 2 \log_2 n' - 2 \log_2 n = \frac{2}{\log 2}\log \left(1+\frac{ar}{n}\right) \sim \frac{2ar}{n \log 2}=o(1).
\end{equation}
Note that
\begin{align}
 f(n')-f(n)&= \frac{n'-n}{\alpha_0(n')-1-\frac{2}{\log 2}} +\left(\frac{n}{\alpha_0(n')-1-\frac{2}{\log 2}}-\frac{n}{\alpha_0(n)-1-\frac{2}{\log 2}} \right).\label{eq:twoparts}
\end{align}
For the first term in (\ref{eq:twoparts}), since $\alpha(n')=\alpha(n)+o(1)=a+x+o(1)$ by (\ref{eq:xproperty}),
\begin{equation}
 \frac{n'-n}{\alpha_0(n')-1-\frac{2}{\log 2}}=\frac{ar}{a+x-1 -\frac{2}{\log 2}+o(1)}= r+ \frac{\left(1+\frac{2}{\log 2}-x \right)r}{a} +o \left(\frac{r}{a} \right). \label{eq:jds}
\end{equation}
For the second term in (\ref{eq:twoparts}), note that by (\ref{eq:alphan}),
\begin{align*}
 \frac{n}{\alpha_0(n')-1-\frac{2}{\log 2}}-\frac{n}{\alpha_0(n)-1-\frac{2}{\log 2}}&= \frac{n}{\alpha_0(n)-1-\frac{2}{\log 2}+\frac{2ar}{n \log 2}+o \left(\frac{ar}{n} \right)}-\frac{n}{\alpha_0(n)-1-\frac{2}{\log 2}}\\
 &\sim \frac{-\frac{2ar}{\log 2}}{\alpha_0(n)^2}\sim-\frac{2r}{a\log 2}
\end{align*}
as $\alpha_0(n) \sim a$. Together with (\ref{eq:twoparts}) and (\ref{eq:jds}), this gives
\[
 f(n')-f(n)= r+\frac{\left(1-x \right)r}{a} +o \left(\frac{r}{a} \right). 
\]\qed
\end{document}